%
%
\documentclass[twoside,a4paper,12pt,centertags]{amsart}
\usepackage{amsmath,amssymb,verbatim,vmargin}
\usepackage[bookmarks=true]{hyperref}   
\usepackage{color}

\theoremstyle{plain}
\newtheorem{thm}{Theorem}[section]
\newtheorem{lem}[thm]{Lemma}
\newtheorem{cor}[thm]{Corollary}
\newtheorem{prop}[thm]{Proposition}

\theoremstyle{definition}
\newtheorem{defn}[thm]{Definition}



\mathchardef\semic="303B
\newcommand{\R}{{\mathbf R}}
\newcommand{\C}{{\mathbf C}}
\newcommand{\Z}{{\mathbf Z}}

\newcommand{\mL}{{\mathcal L}}

\newcommand{\mD}{{\mathcal D}}

\newcommand{\sett}[2]{ \{ #1 \, \semic \, #2 \} }

\newcommand{\barint}{\mbox{$ave \int$}}
\newcommand{\divv}{{\text{{\rm div}}}}

\newcommand{\bmo}{\text{BMO}}

\newcommand{\tr}{\text{{\rm Tr}}\,}

\def\barint_#1{\mathchoice
            {\mathop{\vrule width 6pt
height 3 pt depth -2.5pt
                    \kern -8.8pt
\intop}\nolimits_{#1}}%
            {\mathop{\vrule width 5pt height
3 pt depth -2.6pt
                    \kern -6.5pt
\intop}\nolimits_{#1}}%
            {\mathop{\vrule width 5pt height
3 pt depth -2.6pt
                    \kern -6pt
\intop}\nolimits_{#1}}%
            {\mathop{\vrule width 5pt height
3 pt depth -2.6pt
          \kern -6pt \intop}\nolimits_{#1}}}



\usepackage{color}

\definecolor{gr}{rgb}   {0.,   0.8,   0. }
\definecolor{bl}{rgb}   {0.,   0.5,   1. }
\definecolor{mg}{rgb}   {0.7,  0.,    0.7}




\makeatletter
\@namedef{subjclassname@2010}{%
  \textup{2010} Mathematics Subject Classification}
\makeatother

\begin{document}

\title[A local $Tb$ theorem for matrix weighted paraproducts]
{A local $Tb$ theorem for matrix weighted paraproducts}
\author[Andreas Ros\'en]{Andreas Ros\'en$\,^1$}
\thanks{$^1\,$Formerly Andreas Axelsson}
\address{Andreas Ros\'en\\Mathematical Sciences, Chalmers University of Technology and University of Gothenburg\\
SE-412 96 G{\"o}teborg, Sweden}
\email{andreas.rosen@chalmers.se}

\begin{abstract}
  We prove a local $Tb$ theorem for paraproducts acting on vector valued functions,
  with matrix weighted averaging operators. 
  The condition on the weight is that its square is in the $L_2$ associated matrix $A_\infty$ class.
  We also introduce and use a new matrix reverse H\"older class.
  This result generalizes the previously known case of scalar weights from the proof of the Kato square root problem, as well as the case of diagonal weights, recently used in the study of boundary value problems for degenerate elliptic equations.
\end{abstract}

\thanks{The author was supported by Grant 621-2011-3744 from the Swedish research council, VR}
\keywords{local $Tb$ theorem, paraproduct, matrix weight, stopping time argument, Carleson measure}
\subjclass[2010]{42B20, 42B37, 42C99.}

\maketitle


%
%
%
\section{Introduction}

Paraproducts first appeared in the work of J.-M. Bony~\cite{Bony} on paradifferential operators.
They are closely related to Carleson measures, see \cite{Carleson:58} and \cite{Carleson:62} for the original formulations, and are fundamental in harmonic analysis, in particular in the study of singular integral operators and square functions.
One way to introduce paraproducts is by examining a multiplication operator $f(x)\mapsto b(x)f(x)$
in a wavelet basis. For this introduction, consider the standard Haar basis $\{h_Q\}_{Q\in \mD}$ for $L_2(\R)$, where $Q$ runs through the dyadic intervals $\mD$.
Then we have the $L_2$ identity
\begin{equation}  \label{eq:paraproddec}
  bf= \sum_{Q,R} \Delta_Q b \,\Delta_R f=
  \sum_Q E_Q b\, \Delta_Q f+ \sum_Q \Delta_Q b\, E_Q f + \sum_Q \Delta_Q b\, \Delta_Q f,
\end{equation}
where $\Delta_Q g:= (g,h_Q)h_Q$ and
$E_Q g$ denotes the average of $g$ on $Q$.
The identity follows by splitting the double sum into the three cases $Q\supsetneqq R$,
$Q\subsetneqq R$ and $Q=R$, and using the relation $\sum_{R\supsetneqq Q} \Delta_R= E_Q$ on $Q$ for the
first two terms.
The first term represents a Haar multiplier
$\pi^0_bf:= \sum_Q E_Q b\, \Delta_Q f$ with a rather trivial estimate
$\|\pi^0_b\|_{L_2\to L_2}= \|b\|_{L_\infty}$.
The second term defines a paraproduct operator
$$
 \pi^+_bf:= \sum_Q \Delta_Q b\, E_Q f.
$$
This operator enjoys the sharper bound
\begin{equation}  \label{eq:paraestsimple}
  \|\pi^+_bf\|_{L_2}^2= \sum_Q |(b,h_Q)|^2 |E_Q f|^2 \lesssim \|b\|_{\bmo}^2 \|f\|_{L_2}^2,
\end{equation}
as seen by applying Carleson's embedding theorem and
$\sum_{R\subset Q}|(b,h_Q)|^2 = \int_Q |f(x)-E_Qf|^2 dx$.
Here $\bmo(\R)$ is the John--Nirenberg space of functions of bounded mean oscillation.
Before continuing, we also mention that the third term in \eqref{eq:paraproddec}
turns out to be the adjoint paraproduct $(\pi^+_b)^*f$.

Paraproducts like $\pi^+_b$ are fundamental objects in modern harmonic analysis.
For example they are found at the heart of singular integral theory.
Given a singular integral operator
$$
  T f(x)= \int_\R k(x,y) f(y) dy, \qquad x\in \R,
$$
one can show that
$$
  T= \pi^0_m + \pi^+_{T(1)}+ (\pi^+_{T^*(1)})^*+ R.
$$
See Auscher, Hofmann, Muscalu, Tao and Thiele~\cite{AHMTT} and Auscher and Yang~\cite{AY}.
Here the Haar multiplier is controlled by a weak boundedness hypothesis on $T$ and the
$L_2$ norm of the remainder term $R$ can be estimated using a H\"ormander regularity condition
on the kernel. What remains for $L_2$ boundedness of $T$ is the famous $T(1)$ condition:
The functions $T(1)$ and $T^*(1)$ should belong to $\bmo(\R)$.

Also at the cross roads of operator theory and harmonic analysis, paraproducts are fundamental.
At the core of the proof of the Kato square root estimate
$$
\|\sqrt{-\divv A\nabla}u\|_{L_2(\R^n)}\approx\|\nabla u\|_{L_2(\R^n)}
$$
for uniformly elliptic operators,
proved by Auscher, Hofmann, Lacey, McIntosh and Tchamitchian~\cite{AHLMcT}, there
is a paraproduct, or Carleson, estimate
\begin{equation}   \label{eq:paraprodKato}
  \int_0^\infty\int_{\R^n} |\gamma(t,x) E_t f(x)|^2 dx\frac{dt}t\lesssim \|f\|_{L_2(\R^n)}^2,
\end{equation}
similar to \eqref{eq:paraestsimple}.
We have here passed to a continuous setting, and $E_t$ denotes some standard mollifier at scale $t$.
More importantly, for the Kato square root estimate one needs to work with vector-valued functions
$f:\R^n\to \C^N$, on which $E_t$ act componentwise, and the Carleson multiplier $\gamma_t(x)$ is matrix
valued.
To establish such a paraproduct estimate, one needs to prove that $|\gamma_t(x)|^2 dxdt/t$ is a Carleson
measure. This is proved by a local $T(b)$ argument, a technique pioneered by Christ~\cite{Chr},
motivated by applications to analytic capacity.
The basic idea in such local $T(b)$ arguments is that we suppose given
a family of test functions $b_Q$, one for each dyadic cube, adapted to the object $\gamma(t,x)$
that we seek to estimate. More concretely, one aims to establish the Carleson bound
of $\gamma(t,x)$ by reducing it to an estimate of $\gamma(t,x) E_t b_Q(x)$,
assuming only a lower bound on the mean of $b_Q$ and not a pointwise lower bound.
In the vector-valued situation, an even more refined localisation is needed: A test function $b_Q^v$
for each cube $Q$ and each unit vector $v$ is required such that the mean of $b_Q^v$ on $Q$ is in the
direction specified by $v$.

More recently, the need arose for a generalization to the weighted setting of the local $T(b)$ theorem for
\eqref{eq:paraprodKato}, in connection
with work on boundary value problems for degenerate elliptic equations by
Auscher, Ros\'en and Rule~\cite{ARR}.
The case of scalar weights is a straightforward generalization of the methods from \cite{AHLMcT}.
The argument in \cite{ARR} required certain diagonal matrix-valued weights,
which was a non-trivial extension of \cite{AHLMcT}.
The following main result in this paper concerns the full matrix weighted situation,
which is a non-trivial extension of the diagonal case from \cite{ARR}.

\begin{thm}   \label{thm:main}
  Fix positive integers $n, N, M\ge 1$.
  Let $d\mu= \mu(x)dx$ be a doubling measure on $\R^n$, say $\mu(2Q)\le C_1 \mu(Q)$ for all cubes
  $Q\subset \R^n$ with concentric double $2Q$, $C_1<\infty$, and consider
  the function space $L_2(\mu)= L_2(\R^n, \mu; \R^N)$.
  Let $W: \R^n\to\mL(\R^N)$ be a matrix-valued function such that $W(x)$ is a positive definite
  matrix for almost every $x\in \R^n$, and satisfies the estimate
\begin{equation}  \label{eq:theWest}
  \det \left(\barint_Q W(x)^2 d\mu(x)\right)\le C_2^2
  \exp\left( \barint_Q \ln (\det W(x)^2) d\mu(x) \right),
\end{equation}
for some constant $C_2<\infty$, uniformly for all cubes $Q\subset\R^n$.
Define the dyadic weighted averaging/expectation operator $E_t f(x):= E_Q f$,
where $E_Qf$ denotes the matrix weighted average
$$
  E_Q f:= \left( \int_Q W(x) d\mu(x) \right)^{-1} \int_Q W(x)f(x) d\mu(x)
$$
over the dyadic cube $Q\ni x$ of sidelength $\ell(Q)= 2^k$, $(2^{k-1},2^k]\ni t$.

Let $(t,x)\mapsto \gamma(t,x)\in \mL(\R^N; \R^M)$ be a given matrix-valued function in the half space $\R^{1+n}_+$.
Then we have the Carleson estimate
$$
  \sup_{Q\in\mD} \frac 1{\mu(Q)}\iint_{\R^{1+n}_+} |\gamma(t,x)|^2 \frac{dtd\mu(x)}t <\infty,
$$
provided the following holds.
There should exist a family of test functions $b_Q^v\in L_2(\mu)$,
one for each dyadic cube $Q$ and for each unit vector $v\in \R^N$, such that
\begin{gather*}
  (v, E_Qb_Q^v)= 1, \\
  \int_{\R^n} |b_Q^v(x)|^2 d\mu(x)\le C_3^2 \mu(Q), \\
  \iint_{\R^{1+n}_+}|\gamma(t,x) E_t b_Q^v(x) |^2  \frac{dtd\mu(x)}t \le C_4^2 \mu(Q),
\end{gather*}
for some $C_3, C_4<\infty$.
\end{thm}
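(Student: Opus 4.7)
The plan is to follow the stopping-time corona architecture used for \eqref{eq:paraprodKato} in \cite{AHLMcT} and its diagonal-weight refinement in \cite{ARR}, and identify where genuinely matrix-valued weights force new ingredients. First I would recast the Carleson bound as a statement about dyadic sums: to estimate $\iint_{Q_0\times(0,\ell(Q_0))}|\gamma|^2 \,dt\,d\mu/t \lesssim \mu(Q_0)$ uniformly over base cubes $Q_0\in\mD$, it suffices to dominate $|\gamma(t,x)|^2$ at scale $t\sim\ell(Q)$, for $Q\subset Q_0$, by something involving $\gamma(t,x) E_t b_{Q_0}^v$ for a suitable direction $v$, on a sub-region of $Q_0$ of $\mu$-measure a fixed fraction of $\mu(Q_0)$.

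For each pair $(Q_0,v)$ with $v$ a unit vector, I would construct a corona decomposition of $Q_0$: a family $\mathcal{S}(Q_0,v)$ of maximal stopping subcubes $Q \subsetneqq Q_0$ on which at least one of the following fails: (a) the weighted average $E_Q b_{Q_0}^v$ is still within a small fixed $\varepsilon$ of $v$, or (b) the matrix average $A_Q := \mu(Q)^{-1}\int_Q W^2 \,d\mu$ is still comparable, in both operator norm and condition number, to $A_{Q_0}$. The critical quantitative fact to prove is a packing estimate $\sum_{Q\in\mathcal{S}(Q_0,v)}\mu(Q)\le \eta\,\mu(Q_0)$ with $\eta<1$ independent of $Q_0$ and $v$. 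The contribution from (a) is a John--Nirenberg type bound coming from the $L_2(\mu)$ estimate on $b_{Q_0}^v$ together with the lower bound $(v,E_{Q_0}b_{Q_0}^v)=1$. The contribution from (b) is where \eqref{eq:theWest} enters: this is a matrix $A_\infty$ type condition (a reverse Jensen inequality for $\det W^2$), which I would use to extract the new matrix reverse H\"older class promised in the abstract, and then to control the $\mu$-measure of cubes where the eigenvalues of $A_Q$ have drifted away from those of $A_{Q_0}$.

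Once the packing is secured, on the complement of the stopping cubes inside $Q_0$ we have $E_t b_{Q_0}^v \approx v$ and $W$-averages comparable to $A_{Q_0}$, so the hypothesis on $\gamma(t,x) E_t b_{Q_0}^v$ gives the desired control of $|\gamma(t,x)v|^2 \,dt\,d\mu/t$ on this good zone. Choosing a finite $\varepsilon$-net $\{v_j\}_{j=1}^K\subset S^{N-1}$ and summing over $j$ passes from the directional control $|\gamma(t,x)v_j|^2$ to full control of $|\gamma(t,x)|^2$, at the cost of a dimensional factor $K=K(N,\varepsilon)$. A standard induction on the base cube $Q_0$, feeding the packing constant $\eta<1$ into a geometric series over the successive corona generations, then closes the uniform Carleson bound.

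The main obstacle, and the technically novel step, will be the packing bound associated with the matrix stopping criterion (b). In the scalar case $w\in A_\infty$ one converts this condition into a reverse H\"older inequality $(\barint_Q w^p)^{1/p}\lesssim \barint_Q w$ for some $p>1$, which quickly gives that the set of subcubes where $\barint_Q w$ degenerates compared to $\barint_{Q_0} w$ occupies only a small fraction of $Q_0$. In the matrix setting the averages $\barint_Q W \,d\mu$ do not commute, are not pointwise comparable to $W(x)$, and there is no scalar witness to exploit directly. The determinant/$\exp\log$ comparison \eqref{eq:theWest} is the natural multiplicative substitute, but turning it into a quantitative packing bound on the spread of eigenvalues of $A_Q$, uniformly in $Q_0$, is where most of the technical work will lie.
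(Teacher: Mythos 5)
There is a genuine gap, and it sits exactly where you defer the work. Your plan hinges on a single combined stopping family $\mathcal S(Q_0,v)$ (criteria (a) and (b) together) with first-generation packing $\sum_{Q\in\mathcal S}\mu(Q)\le\eta\,\mu(Q_0)$, $\eta<1$, fed into a geometric-series induction. This form of the packing bound is unattainable. If the comparability tolerance in your criterion (b) is a small fixed $\epsilon_3$, the first generation of (b)-cubes can exhaust $Q_0$: already for the scalar weight $w=1+2\epsilon_3 r$, with $r=\pm1$ alternating on the children of $Q_0$, every child is a stopping cube while $w$ lies uniformly in $A_\infty\cap B_2$. If instead you keep the tolerance in (b) large (so that a Volberg-type lemma could make its first generation small), then you must rely on your strong criterion (a), $|E_Qb_{Q_0}^v-v|\le\varepsilon$, to pin down the direction of $E_tb$ on the good zone --- and that criterion has no packing bound either, even unweighted: take $b=v+ug$ with $u\perp v$ and $g$ mean-zero, $|g|\approx 1$, oscillating at small scales; then $(v,E_{Q_0}b)=1$ and $\|b\|_{L_2}^2\lesssim\mu(Q_0)$, yet $|E_Qb-v|\approx 1$ on essentially every small cube. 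What is actually true, and is the main new content of the paper, is an all-generations Carleson packing for the small-tolerance weight criterion, $\sum_{R\in B_*^W(Q)}\mu(R)\lesssim\epsilon_3^{-2}\mu(Q)$ (Proposition~\ref{prop:coronastop}), with a large constant; its proof needs genuinely matrix tools --- the martingale square-function bound $\sum_R(W_R-W_{R_*})^2\mu(R)\le((W^2)_Q-(W_Q)^2)\mu(Q)$ of Proposition~\ref{prop:matrixmarting}, the $A_\infty^N$ packing lemma of Proposition~\ref{prop:volbergstop} as an auxiliary large-$\lambda$ corona, and the $B_2^N$ half of \eqref{eq:theWest}. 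Because this constant is large, the weight corona cannot be merged with the test-function corona into one family with $\eta<1$; the two must be run in parallel and combined through the iterated sawtooth Lemma~\ref{lem:JN}, with the final estimate on $G^W(S_1)\cap G^b(S_2)$ normalized by $\mu(S_2)$.

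Two further ingredients are missing even granted the packing. First, since only one-sided stopping criteria are packable (stop when $|E_Rb_Q^{v_0}|>1/\epsilon_2$ or when the weighted inner product $(v_0,W_Q^{-1}W_RE_Rb_Q^{v_0})<\epsilon_2$, as in Proposition~\ref{prop:Katostop}), on the good region you only know that $E_tb$ lies in a compact approximation $D(v_0)$ of a half-space, not near $v_0$; converting the hypothesis on $\gamma E_tb$ into a bound on $|\gamma|$ then requires the compactness decomposition of $\R^{1+n}_+$ into the sets $S(v_0)$ of Proposition~\ref{prop:Katonewsectors} (or the cone decomposition of $\mL(\R^N)$ from \cite{AHLMcT}); your $\varepsilon$-net must index this decomposition of the half-space, not merely serve to sum directional bounds $|\gamma v_j|$. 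Second, the packing for the test-function criterion is not a weight-free John--Nirenberg argument: $(v,E_Qb_Q^v)=1$ is a $W$-weighted mean, and converting the mass of $b_Q^{v_0}$ captured on the good set into a lower bound on its $\mu$-measure uses that $x\mapsto|W(x)W_Q^{-1}v_0|^2$ is a scalar $A_\infty(d\mu)$ weight, which is itself a consequence of \eqref{eq:theWest} through the matrix reverse H\"older class (Corollary~\ref{cor:weightrelations}). So while your high-level corona outline points in the right direction, the specific stopping criteria, the form of the packing estimates, and the mechanism for combining them would all have to be changed, and the matrix-weight estimates you identify as "the technical work" are precisely the theorem's new core, not a routine adaptation of the scalar reverse H\"older argument.
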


We note that it suffices to prove the theorem for $M=1$ since the hypothesis give 
estimates for each row in $\gamma(t,x)$, but we shall not need to make this reduction.
Also, in the two estimates involving $\gamma(t,x)$, it is clear from the proof that 
we can replace $|\gamma(t,x)|^2$
by some more general non-negative radial function $\phi(|\gamma(t,x)|)$, assuming
estimates of $\phi(|\gamma(t,x) E_t b_Q^v(x)|)$.
We also remark that the proof is purely real-variable, and the result applies directly
to complex valued functions, simply by writing $\C^N= \R^{2N}$.
Note that unless the operators are assumed to be complex linear, we need here, even
in the case $\R^N=\C$, different test functions $b_Q^v$ for different directions $v$.

Previously known special cases of Theorem~\ref{thm:main} are the following.

\begin{itemize}
  \item [{\rm (i)}]
    The unweighted case $W=I$ and $d\mu= dx$ is the core part of the proof of the Kato square
    root estimate by Auscher, Hofmann, Lacey, McIntosh and Tchamitchian~\cite{AHLMcT}.
  \item  [{\rm (ii)}]
    The diagonal matrix weighted case $N= 1+n$ and $W= \begin{bmatrix} 1 & 0 \\ 0 & \mu^{-1}I \end{bmatrix}$
    was needed and proved by the author jointly with Auscher and Rule in \cite{ARR} as mentioned above.
\end{itemize}

A straightforward generalization of (ii) is that of diagonal matrix weights
$$
  W= \begin{bmatrix} w_1 & \cdots & 0 \\ \vdots & \ddots & \vdots \\ 0 & \cdots & w_N \end{bmatrix}.
$$
In this case, the estimate \eqref{eq:theWest} reduces to the scalar $A_\infty(d\mu)$ estimate for
each of $w_1^2,\ldots, w_N^2$. This is known to be equivalent to the reverse H\"older estimate
$B_2(d\mu)$ for each of $w_1,\ldots, w_N$,
that is
$$
  \left(\barint_Q w_i^2 d\mu\right)^{1/2}\lesssim \barint_Q w_id\mu, \qquad i=1,\ldots, N.
$$
See Coifman and Fefferman~\cite{CF} and Buckley~\cite[Prop. 3.10]{Buck}.
In the case (ii) this amounts to the Muckenhoupt condition $\mu\in A_2(dx)$.

Turning to the meaning of \eqref{eq:theWest} in the general matrix case, we have a sequence of
estimates
\begin{multline*}
  \left(\det \barint_Q W^2 d\mu \right)^{1/2}\ge \det \barint_Q W d\mu
  \ge \exp\left( \barint_Q \ln(\det W) d\mu \right) \\
  \ge \left(\det \barint_Q W^{-1} d\mu \right)^{-1}\ge \left(\det \barint_Q W^{-2} d\mu \right)^{-1/2},
\end{multline*}
which hold for any $Q$ and any matrix weight $W$, that is measurable positive matrix-valued function.
\begin{itemize}
  \item [{\rm (i)}]
  A reverse estimate $\left(\det \barint_Q W^2 d\mu \right)^{1/2}\lesssim \det \barint_Q W d\mu$, uniformly for $Q$,
amounts to a matrix reverse H\"older condition $B_2^N(d\mu)$, which we have not found in the literature in the non-scalar case $N\ge 2$.
  \item [{\rm (ii)}]
  A reverse estimate $\det \barint_Q W d\mu
  \lesssim \exp\left( \barint_Q \ln(\det W) d\mu \right)$, uniformly for $Q$,
amounts to a matrix $A_\infty^N(d\mu)$ condition.
See Volberg~\cite{Vol}, where this appears as the $A_{2,\infty}$ condition.
  \item [{\rm (iii)}]
  A reverse estimate $\det \barint_Q W d\mu
  \lesssim \left(\det \barint_Q W^{-1} d\mu \right)^{-1}$, uniformly for $Q$,
amounts to a matrix $A_2^N(d\mu)$ condition.
See Treil and Volberg~\cite{TV}, where this matrix Muckenhoupt $A_2$ condition appears.
\end{itemize}

We note that \eqref{eq:theWest} amounts to assuming both the matrix $B_2^N(d\mu)$ and $A_\infty^N(d\mu)$
conditions. In the non-scalar case, we do not know if $B_2^N(d\mu)$ implies $A_\infty^N(d\mu)$, as it does in the scalar case.

The above conditions on matrix weights used in this paper are all related to $L_2(d\mu)$.
It is known, see \cite{Vol}, that conditions on matrix weights for $L_p$ estimates, $p\ne 2$,
cannot be formulated in terms of some $L_q$ averages of weights and their determinants.
Instead the appropriate point of view is to consider norm-valued weights, rather than
matrix-valued weights.

The proof of Theorem~\ref{thm:main} is in Section~\ref{sec:proof}. Similar to \cite{ARR} it combines 
two parallel stopping time arguments, one for the test functions and one for the weight.
The one for the test functions, Proposition~\ref{prop:Katostop}, follows \cite{AHLMcT}, but just like 
in \cite{ARR}, a factor depending on the weight remains to be controlled.
The stopping time argument used to control the weight in \cite{ARR}, uses in an essential way the scalar
nature of the weight. In this paper we find in Proposition~\ref{prop:coronastop} a stopping time argument for the weight that is simpler than that in \cite{ARR}, and which also generalize to matrix weights.
Proposition~\ref{prop:coronastop} is the main new technique of this paper, and it uses a stopped and matrix
version of the Carleson condition for scalar $A_\infty$ weights. See Fefferman, Kenig and Pipher~\cite[Sec. 3]{FKP} and Buckley~\cite[Sec. 5]{Buck}.

Proposition~\ref{prop:coronastop} for full matrix weights seems to be impossible to derive from
properties of scalar weights. An interesting reverse reverse triangle inequality, originating from a failed such attempt, is included in Section~\ref{sec:reverev}, since it seems interesting in its own right.
A reason why such reduction to scalar weights does not work, is that it seems to require infinitely many
parallel stopping time arguments as in Lemma~\ref{lem:JN}.
The argument in \cite{ARR} used that the logarithm of a Muckenhoupt weight is of bounded mean oscillation.
Although a matrix version of this holds true, see Bownik~\cite[Sec. 6]{Bow}, it seems impossible to 
prove Proposition~\ref{prop:coronastop} following \cite{ARR}, since the values of a matrix weight do not commute.

Going back to \cite{AHLMcT}, the proof of Theorem~\ref{thm:main} also requires a compactness argument
to handle the matrix valued multipliers $\gamma_t(x)$. To this end, it is standard to decompose
the space of matrices $\mL(\R^N)$ into small cones.
This argument would work in this paper as well, but we choose to make use of a somewhat sharper argument, 
Proposition~\ref{prop:Katonewsectors}, which instead use a conical decomposition of $\R^N$ itself.
Independently, this argument has earlier been found by Hyt\"onen~\cite{Hunpub}.

In Section~\ref{sec:matrixweights}, we discuss the new matrix reverse H\"older classes $B_2^N(d\mu)$ and 
the matrix $A_\infty^N(d\mu)$ class from \cite{Vol}.
In particular we note a matrix martingale square function estimate for $B_2^N(d\mu)$ weights
and a packing condition for $A_\infty^N(d\mu)$ weights, which are needed for the proof of Proposition~\ref{prop:coronastop}.

\section{Stopping criteria and sawtooths}    \label{sec:prel}

We denote cubes in $\R^n$ by $Q, R, S$.
In particular we shall consider the standard dyadic cubes $\mD=\bigcup_{t>0}\mD_t$,
where $\mD_t$ denotes the set of dyadic cubes of sidelength $\ell(Q)=2^{-k}$, where $k\in \Z$, $2^{-k-1}<t\le 2^{-k}$.

Throughout this paper, $\mu$ denotes a fixed doubling measure on $\R^n$.
We write $2Q$ for the cube concentric with $Q$ and having twice the
sidelength: $\ell(2Q)= 2\ell(Q)$.
We write $\mu(Q):= \int_Q d\mu$ for the $\mu$-measure of $Q$.
For a function $W$, possibly matrix valued, we write $W_Q:= \barint_Q W d\mu= \mu(Q)^{-1}\int_Q Wd\mu$,
for the $\mu$-average of $W$ over $Q$.

In this section we consider the main technique used in this paper: stopping time arguments.
The structure of these arguments is as follows.
We have a stopping criterion (S) which selects, for a given dyadic cube $Q\in\mD$, a
set $B_1(Q)$ of disjoint dyadic subcubes $R\in\mD$ of $Q$. By applying (S) to each of
these subcubes $R$, we obtain the set of second generation stopping cubes
$
  B_2(Q):= \bigcup_{R\in B_1(Q)} B_1(R)
$
under $Q$. Proceeding recursively, we define the $k$'th generation stopping cubes
$B_k(Q):= \bigcup_{R\in B_{k-1}(Q)} B_1(R)$, $k=3,4,\ldots$, and also set $B_0(Q):= \{Q\}$.
Let $B_*(Q):= \bigcup_{k=0}^\infty B_k(Q)$.
In this way we obtain from (S) a partition
$$
  \widehat Q= \bigcup_{S\in B_*(Q)} G(S)
$$
of the Carleson box $\widehat Q:= (0,\ell(Q))\times Q\subset \R^{1+n}$ into sawtooth regions
$$
G(S):= \widehat S\setminus \bigcup_{R\in B_1(S)} \widehat R.
$$
We allow us some abuse of notation: Sometimes we also consider the Carleson boxes as
sets of dyadic cubes $\widehat Q=\sett{R\in\mD}{R\subset Q}$, in which case the sawtooth regions
$G(S)$ also are viewed as sets of dyadic cubes.
This is possible due to the one-to-one correspondence between dyadic cubes $Q\subset\R^n$
and dyadic Whitney regions $(\ell(Q)/2,\ell(Q))\times Q\subset\R^{1+n}_+$.

For a given collection of dyadic cubes $\widetilde \mD\subset \mD$, for example
$\widetilde \mD= B_*(Q)$, we denote by $R_*$ the stopping parent of $R\in \mD$, that is
the smallest cube $R_*\in \widetilde D$ such that $R_*\supset R$.

Two basic techniques are the following.

\begin{lem}   \label{lem:JN}
  Consider a finite number of stopping criteria, producing stopping cubes
  $B_*^1(Q)$, \ldots, $B_*^k(Q)$.
  Assume that we have packing conditions
  $$
  \sup_Q \frac 1{\mu(Q)}\sum_{R\in B^i_*(Q)} \mu(R)<\infty, \qquad i=1,\ldots, k.
  $$
  Then the Carleson estimate $\sup_Q\frac 1{\mu(Q)}\iint_{\widehat Q} f(t,x)dtdx <\infty$
  follows from
  $$
    \sup_{S_1\supset\ldots\supset S_k}\frac 1{\mu(S_k)}\iint_{G^1(S_1)\cap G^2(S_2)\cap\ldots\cap G^k(S_k)} f(t,x)dtdx<\infty,
  $$
  for any given measureable function $f(t,x)\ge 0$ on $\R^{1+n}_+$.
\end{lem}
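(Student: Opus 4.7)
The plan is to iterate the $k$ stopping time decompositions one after another in order to refine $\widehat Q$ into a single disjoint union of intersections of sawtooth regions, and then to sum, using the hypothesized estimate on each such intersection together with the packing conditions applied in sequence from the innermost to the outermost level.

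Concretely, the first stopping criterion applied at $Q$ already gives the partition $\widehat Q = \bigsqcup_{S_1 \in B_*^1(Q)} G^1(S_1)$ described earlier in this section. For each such $S_1$ I restart the \emph{second} stopping procedure at $S_1$; this yields $\widehat{S_1} = \bigsqcup_{S_2 \in B_*^2(S_1)} G^2(S_2)$, whence, because $G^1(S_1) \subset \widehat{S_1}$, we have
\[
G^1(S_1) = \bigsqcup_{S_2 \in B_*^2(S_1)} \bigl(G^1(S_1) \cap G^2(S_2)\bigr).
\]
Iterating, at step $i$ restart the $i$th stopping procedure at each $S_{i-1}$ produced at the previous step. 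After $k$ steps this gives
\[
\widehat Q \;=\; \bigsqcup_{S_1 \supset S_2 \supset \cdots \supset S_k} \bigl(G^1(S_1) \cap G^2(S_2) \cap \cdots \cap G^k(S_k)\bigr),
\]
where the chain runs over $S_1 \in B_*^1(Q)$ and $S_i \in B_*^i(S_{i-1})$ for $2 \le i \le k$.

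Given this partition, the hypothesis bounds $\iint_{G^1(S_1)\cap\cdots\cap G^k(S_k)} f\,dt\,dx \le C\,\mu(S_k)$ on each piece. Denoting by $C_i$ the packing constant of the $i$th stopping criterion, one then sums from the innermost level outward: the sum of $\mu(S_k)$ over $S_k \in B_*^k(S_{k-1})$ is at most $C_k\,\mu(S_{k-1})$; substituting and summing over $S_{k-1} \in B_*^{k-1}(S_{k-2})$ gains a factor $C_{k-1}\,\mu(S_{k-2})$; and continuing this telescoping argument one obtains
\[
\iint_{\widehat Q} f \, dt\, dx \;\le\; C\,C_1 C_2 \cdots C_k \, \mu(Q),
\]
uniformly in $Q$, which is the asserted Carleson estimate.

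The argument is essentially bookkeeping, and the only point needing a moment's thought is that at each step the sawtooths $\{G^i(S_i) : S_i \in B_*^i(S_{i-1})\}$ really do cover $\widehat{S_{i-1}}$ disjointly; this holds directly from the recursive definition of $B_*^i$ applied with $S_{i-1}$ playing the role of the top cube. I therefore do not foresee a genuine obstacle: the lemma just packages the fact that several independent stopping time decompositions combine into one product decomposition with a product packing constant.
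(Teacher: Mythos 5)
Your proof is correct and follows exactly the paper's argument: the same iterated sawtooth decomposition of $\widehat Q$ into intersections $G^1(S_1)\cap\cdots\cap G^k(S_k)$ over nested chains $S_i\in B_*^i(S_{i-1})$, followed by summation from the innermost level outward using the packing conditions. You have merely written out the bookkeeping that the paper leaves implicit.
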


\begin{proof}
Consider the iterated decomposition into sawtooth regions
$$
  \widehat Q= \bigcup_{S_1\in B_*^1(Q)} \bigcup_{S_2\in B_*^2(S_1)}\ldots \bigcup_{S_k\in B_*^{k}(S_{k-1})}
  G^1(S_1)\cap \ldots\cap G^k(S_k).
$$
Use the given estimate and then sum, using the given $k$ packing conditions.
This proves the Carleson estimate.
\end{proof}

For some types of stopping criteria, one can verify the packing condition in
Lemma~\ref{lem:JN} as follows.

\begin{lem}   \label{lem:geomet}
  If $\sup_Q \frac 1{\mu(Q)}\sum_{R\in B_1(Q)} \mu(R)<1$, then
  $\sup_Q \frac 1{\mu(Q)}\sum_{R\in B_*(Q)} \mu(R)<\infty$.
\end{lem}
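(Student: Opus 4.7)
The plan is to set $\eta := \sup_Q \frac{1}{\mu(Q)} \sum_{R\in B_1(Q)} \mu(R)$, which by hypothesis satisfies $\eta<1$, and then to control each generation $B_k(Q)$ geometrically in $\eta$. The whole proof is essentially a geometric series argument, so I expect no real obstacle; the only thing to be careful about is iterating the first-generation bound correctly through the recursive definition $B_k(Q) = \bigcup_{R \in B_{k-1}(Q)} B_1(R)$.

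First I would record the base case: the definition of $\eta$ gives $\sum_{R \in B_1(Q)} \mu(R) \le \eta \,\mu(Q)$ for every dyadic cube $Q$. Then I would prove by induction on $k \ge 1$ the bound
\[
\sum_{R \in B_k(Q)} \mu(R) \le \eta^k \mu(Q).
\]
Assuming the bound for $k-1$, I would split $B_k(Q)=\bigcup_{S\in B_1(Q)} B_{k-1}(S)$ (which follows by unrolling the recursion, since $B_j(Q) = \bigcup_{S \in B_1(Q)} B_{j-1}(S)$ for all $j\ge 1$), so that
\[
\sum_{R\in B_k(Q)} \mu(R) = \sum_{S \in B_1(Q)} \sum_{R\in B_{k-1}(S)} \mu(R) \le \eta^{k-1} \sum_{S\in B_1(Q)} \mu(S) \le \eta^k \mu(Q),
\]
using the inductive hypothesis on each $S \in B_1(Q)$ followed by the base case applied to $Q$.

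Finally, summing in $k$ and using $B_0(Q)=\{Q\}$ yields
\[
\sum_{R \in B_*(Q)} \mu(R) = \sum_{k=0}^\infty \sum_{R\in B_k(Q)} \mu(R) \le \sum_{k=0}^\infty \eta^k \mu(Q) = \frac{\mu(Q)}{1-\eta},
\]
which gives the uniform bound $\sup_Q \frac{1}{\mu(Q)}\sum_{R\in B_*(Q)} \mu(R) \le (1-\eta)^{-1}<\infty$, as desired.
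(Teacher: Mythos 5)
Your proof is correct and is essentially the same geometric-series argument as the paper's: both iterate the first-generation bound to get $\sum_{R\in B_k(Q)}\mu(R)\le \eta^k\mu(Q)$ and then sum over $k$. The only (immaterial) difference is that you peel off the first generation in the induction via $B_k(Q)=\bigcup_{S\in B_1(Q)}B_{k-1}(S)$, whereas the paper peels off the last via $B_{k+1}(Q)=\bigcup_{S\in B_k(Q)}B_1(S)$.
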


\begin{proof}
  For some $c<1$ and all $S\in \mD$, we have
  $\sum_{R\in B_1(S)} \mu(R)\le c\mu(S)$. Summing over $S\in B_k(Q)$, we obtain
  $$
    \sum_{R\in B_{k+1}(Q)} \mu(R)= \sum_{S\in B_k(Q)}\sum_{R\in B_1(S)} \mu(R)\le c \sum_{S\in B_k(Q)}\mu(S)\le\ldots \le c^{k+1}\mu(Q).
  $$
  Finally we can sum over $k$, since the geometric series converges.
\end{proof}

\section{Matrix and scalar weights}   \label{sec:matrixweights}

In this section we collect the estimates that we need for matrix weights, that is functions $W:\R^n\to \mL(\R^N)$
for which $W(x)$ is a positive definite matrix for almost all $x\in\R^n$.
We also recall some needed well known results for scalar weights, that is in the case $N=1$.

\begin{prop}   \label{prop:mB2}
For a given matrix weight $W$ and cube $Q\subset \R^n$, consider the following estimates.
\begin{itemize}
  \item [{\rm (i)}]
  $\left( \barint_Q |W(x)a|^2d\mu(x) \right)^{1/2}\lesssim \left| \barint_Q W(x)ad\mu(x) \right|$,
   for all $a\in\R^N$
  \item [{\rm (ii)}]
  $\left|\left( \barint_Q W(x)^2d\mu(x) \right)^{1/2}\left( \barint_Q W(x)d\mu(x) \right)^{-1}a\right|\lesssim |a|$,
   for all $a\in\R^N$
    \item [{\rm (iii)}]
  $\left|\left( \barint_Q W(x)d\mu(x) \right)^{-1}\left( \barint_Q W(x)^2d\mu(x) \right)\left( \barint_Q W(x)d\mu(x) \right)^{-1}a\right|\lesssim |a|$,
   for all $a\in\R^N$
    \item [{\rm (iv)}]
  $\det\left( \barint_Q W(x)^2d\mu(x) \right)^{1/2}\lesssim \det\left( \barint_Q W(x)d\mu(x) \right)$
\end{itemize}
If one of these estimates holds, then the other three also hold.
The four estimates $\ge$ always hold with constant $1$.
\end{prop}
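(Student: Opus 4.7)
The plan is to encode all four estimates as statements about a single symmetric positive definite matrix and reduce the equivalences to linear algebra. Write $A := \barint_Q W\, d\mu$ and $B^2 := \barint_Q W^2\, d\mu$, so $A$ is symmetric positive definite and $B := (B^2)^{1/2}$ is its positive square root; set $C := A^{-1}B^2 A^{-1}$, again symmetric positive definite. The key identity $\barint_Q |W(x)a|^2 d\mu(x) = \langle B^2 a, a\rangle = |Ba|^2$ lets me reformulate (i) as $|Ba| \lesssim |Aa|$ for all $a\in\R^N$.

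First I would handle (i)$\Leftrightarrow$(ii)$\Leftrightarrow$(iii). Substituting $a\mapsto A^{-1}a$ immediately identifies (i) with (ii), which in turn says $\|BA^{-1}\|\lesssim 1$. Since $A$ and $B$ are symmetric, $(BA^{-1})^* = A^{-1}B$, and the $C^*$-identity $\|T\|^2=\|T^*T\|$ gives $\|BA^{-1}\|^2 = \|A^{-1}B^2A^{-1}\| = \|C\|$, so (ii) is equivalent to (iii), i.e.\ $\|C\|\lesssim 1$.

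The crux is (iii)$\Leftrightarrow$(iv). Jensen's inequality gives $|Aa|^2 = \bigl|\barint_Q Wa\, d\mu\bigr|^2 \le \barint_Q|Wa|^2 d\mu = |Ba|^2$, so $A^2 \le B^2$ as positive operators and hence $C \ge I$. Writing the eigenvalues of $C$ as $\lambda_1\ge\ldots\ge\lambda_N \ge 1$, I obtain the sandwich
\[
\|C\| = \lambda_1 \le \lambda_1\cdots\lambda_N = \det C \le \|C\|^N,
\]
so $\|C\|\lesssim 1$ iff $\det C\lesssim 1$. Since $\det C = (\det B)^2/(\det A)^2$, the latter is exactly (iv).

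The always-true $\ge$ directions with constant $1$ all follow from the same Jensen input, which gives $C \ge I$ and hence $\|C\|\ge 1$, $\det C\ge 1$, $|BA^{-1}a|\ge|a|$, and $|Ba|\ge|Aa|$; the (iv)$\ge$ direction uses that the determinant is monotone on the positive cone. The main conceptual point---and the reason (iv) is genuinely as strong as (iii) here---is the two-sided comparison $\|C\|\le\det C\le\|C\|^N$, which holds precisely because every eigenvalue of $C$ lies in $[1,\infty)$. Without the Jensen lower bound $C\ge I$, a positive definite matrix can have tiny determinant while being very large in norm, so the equivalence of the matrix reverse H\"older and log-determinant conditions is genuinely a feature of the $A_\infty^N/B_2^N$ setup.
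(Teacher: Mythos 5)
Your proof is correct and takes essentially the same route as the paper: both reduce the four statements to linear algebra for the single operator $T=\bigl(\barint_Q W^2\,d\mu\bigr)^{1/2}\bigl(\barint_Q W\,d\mu\bigr)^{-1}$ (your $C$ is $T^*T$), identify (i)--(iii) via the $C^*$-identity $\|T\|^2=\|T^*T\|$, and use the Jensen lower bound $T^*T\ge I$ to compare norm and determinant. Your eigenvalue sandwich $\|C\|\le\det C\le\|C\|^N$ just makes explicit the step the paper dismisses as ``clear'' from $|T^{-1}|\le 1$.
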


The proof below is analogous to the corresponding proof for matrix $A_2$ weights.
See Lauzon and Treil~\cite[Sec. 3.1.2]{LT} and references therein.

\begin{proof}
We note that
$$
  \left( \barint_Q |W(x)a|^2d\mu(x) \right)^{1/2}= \left|\left( \barint_Q W(x)^2d\mu(x) \right)^{1/2}a \right|
$$
and $|A|^2= |A^*A|$, where
$$
 A:=\left( \barint_Q W(x)^2d\mu(x) \right)^{1/2}\left( \barint_Q W(x)d\mu(x) \right)^{-1}.
$$
From this the equivalence of (i)-(iii) is clear.
The inequality $\ge$ is clear for (i), and since $A$ and $A^*A$ are inverses of contractions,
the inequality $\ge$ follows for (ii) and (iii).
Again noting that $|A^{-1}|\le 1$, the equivalence with estimate (iv) is clear.
\end{proof}

\begin{prop}   \label{prop:mAinfty}
For a given matrix weight $W$ and cube $Q\subset \R^n$, consider the following estimates.
\begin{itemize}
  \item [{\rm (i)}]
    $\exp\left( \barint_Q \ln |W(x)^{-1/2} a| d\mu(x) \right)\lesssim
   \left|\left(\barint_Q W(x) d\mu(x)\right)^{-1/2}a \right|$, for all $a\in\R^N$
  \item [{\rm (ii)}]
    $\det\left( \barint_Q W(x)d\mu(x) \right)\lesssim \exp\left( \barint_Q \ln\det W(x)d\mu(x) \right)$
\end{itemize}
If one of these estimates holds, then the other also holds.
The two estimates $\ge$ always hold with constant $1$.
\end{prop}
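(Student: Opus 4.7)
The plan is to establish four things separately: the two reverse ($\ge$) inequalities with constant $1$, and the two implications (i)$\Rightarrow$(ii) and (ii)$\Rightarrow$(i). The strategy parallels the four-part proof of Proposition~\ref{prop:mB2}.

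The $\ge$ direction of (ii) is immediate from Jensen's inequality for the concave function $\ln\det$ on positive definite matrices. For the $\ge$ direction of (i), one starts from the pointwise Cauchy--Schwarz bound $|\langle a,b\rangle|^2 \le \langle W(x)^{-1}a,a\rangle\langle W(x)b,b\rangle$, takes logarithms and $\mu$-averages over $Q$, applies Jensen to bound $\barint_Q \ln\langle Wb,b\rangle\,d\mu \le \ln\langle (\barint_Q W\,d\mu)b,b\rangle$, and finally takes the supremum over $b$ using the variational identity $\langle M^{-1}a,a\rangle = \sup_b |\langle a,b\rangle|^2/\langle Mb,b\rangle$ valid for any positive definite $M$.

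For (i)$\Rightarrow$(ii), apply (i) to an orthonormal basis $\{a_j\}_{j=1}^N$ of eigenvectors of $\barint_Q W\,d\mu$, so that $\prod_j |(\barint_Q W\,d\mu)^{-1/2}a_j| = (\det \barint_Q W\,d\mu)^{-1/2}$. Multiplying the $N$ inequalities from (i) and pulling the product inside the exponential yields an upper bound on $\exp \barint_Q \sum_j \ln|W^{-1/2}a_j|\,d\mu$; Hadamard's determinantal inequality $\prod_j |W(x)^{-1/2}a_j| \ge (\det W(x))^{-1/2}$ applied pointwise then produces (ii) at the cost of raising the constant to the power $2N$.

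The main obstacle is the converse (ii)$\Rightarrow$(i), since (ii) is a single scalar estimate per cube while (i) is a whole family of inequalities indexed by $a\in\R^N$. The key tool for bridging this gap is the Schur complement identity $\langle M^{-1}a,a\rangle = \det(M|_{a^\perp})/\det M$ valid for any positive definite $M$ and unit vector $a$, where $M|_{a^\perp}$ denotes the compression of $M$ to the $(N-1)$-dimensional orthogonal complement of $a$. Since compression commutes with $\mu$-averaging,
\[
  \exp \barint_Q \ln|W^{-1/2}a|^2\,d\mu
  \;=\; \frac{\exp \barint_Q \ln\det(W|_{a^\perp})\,d\mu}{\exp \barint_Q \ln\det W\,d\mu}.
\]
Jensen applied to the $(N-1)$-dimensional positive weight $W|_{a^\perp}$ bounds the numerator above by $\det \barint_Q W|_{a^\perp}\,d\mu$, while hypothesis (ii) bounds the denominator below by $C^{-2}\det \barint_Q W\,d\mu$; the ratio on the right-hand side is then exactly $C^2|(\barint_Q W\,d\mu)^{-1/2}a|^2$ by the same Schur complement identity, so taking square roots yields (i) with the same constant $C$.
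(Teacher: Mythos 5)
Your proposal is correct, and three of its four parts (both $\ge$ inequalities and (i)$\Rightarrow$(ii)) follow essentially the same lines as the paper: the duality/Cauchy--Schwarz/Jensen argument for $\ge$ in (i), the determinant arithmetic--geometric mean inequality for $\ge$ in (ii), and the Hadamard-type bound $\det T\le\prod_i|Te_i|$ for (i)$\Rightarrow$(ii) (the paper normalizes via $T(x)=W(x)^{-1/2}W_Q^{1/2}$ and uses an arbitrary ON basis, whereas you work with the eigenbasis of $W_Q$ so that the right-hand sides telescope exactly to $(\det W_Q)^{-1/2}$; both work). The interesting divergence is in (ii)$\Rightarrow$(i). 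The paper picks, for each $x$, a unit vector $b(x)$ minimizing $|T(x)b(x)|$, uses $|T(x)a|\,|T(x)b(x)|^{N-1}\le\det T(x)$, and controls the error term via the splitting $\ln^+(t)=\ln(t)+\ln^+(1/t)$ together with the a priori bound $\barint_Q|W(x)^{1/2}W_Q^{-1/2}|^2\,d\mu\le N$. Your route instead invokes the cofactor/Schur complement identity $\brac{M^{-1}a,a}=\det(M|_{a^\perp})/\det M$, which converts the vector estimate (i) into a ratio of two determinant quantities; the numerator is handled by the $(N-1)$-dimensional determinant Jensen inequality (i.e.\ the already-proved $\ge$ direction of (ii), applied to the compressed weight), and the denominator by hypothesis (ii). This is a genuinely different and arguably cleaner mechanism: it avoids the measurable selection of $b(x)$ and the $\ln^+$ bookkeeping, yields an explicit constant ($C^{1/2}$ if (ii) holds with constant $C$, after restricting to unit vectors $a$ by homogeneity), and makes transparent why a single scalar condition per cube controls the whole family of vector estimates. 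The only point worth making explicit in a write-up is that under (ii) both $\barint_Q\ln\det W\,d\mu$ and $\barint_Q\ln\det(W|_{a^\perp})\,d\mu$ are finite, so that splitting the exponential of the difference into a quotient is legitimate; this follows from $\det(W|_{a^\perp})=\brac{W^{-1}a,a}\det W\ge\det W/\brac{Wa,a}$ together with the upper bounds from Jensen, so there is no real gap.
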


This is proved in \cite[Sec. 2]{Vol}. For completeness we sketch the proof.

\begin{proof}
To prove $\ge$ in (i), by duality it suffices to prove an upper bound on
$|W_Q^{1/2}a|$, which follows from Jensen's inequality and the estimate
$|(a,b)|\le |W(x)^{-1/2}a| |W(x)^{1/2}b|$.
The proof of $\ge$ for (ii) uses the determinant version of the geometric-arithmetic mean
inequality. See \cite[Lem. 4.2]{TV} for the discrete version of this.

For the equivalence of estimates $\lesssim$, we write (i) as $\exp\left( \barint_Q \ln |T(x) a| d\mu(x) \right)\lesssim |a|$ and (ii) as $\exp\left( \barint_Q \ln\det T(x)d\mu(x) \right)\lesssim 1$,
where $T=T(x):= W(x)^{-1/2}W_Q^{1/2}$.
The proof of (i)$\Rightarrow$(ii) follows from $\det T\le \prod_i |Te_i|$, with $\{e_i\}$ being an ON-basis.
For the proof of (ii)$\Rightarrow$(i), let $b(x)$ be a unit vector which minimizes
$|T(x)b(x)|$, so that $|T(x) a||T(x)b(x)|^{N-1}\le \det T(x)$ for any unit vector $a$.
Using $\ln^+(t)= \ln(t)+ \ln^+(1/t)$, where $\ln^+(t):= \max(\ln(t),0)$, with
$t= |T(x)a|$ and $t=|T(x)b(x)|^{N-1}$ respectively,
we obtain
\begin{multline*}
  \barint_Q \ln^+|T(x)a| d\mu
  \\
  \le
  \barint_Q\ln \det T(x)d\mu+ \barint_Q\ln^+(|T(x) a||T(x)b(x)|^{N-1})^{-1} d\mu  \\
  \lesssim 1+\barint_Q\ln^+|T(x)^{-1}|d\mu
  \lesssim 1+\barint_Q |T(x)^{-1}|^2 d\mu  \\
  = 1+\barint_Q |W(x)^{1/2}W_Q^{-1/2}|^2 d\mu
  \lesssim 1.
\end{multline*}
\end{proof}

\begin{defn}
  Let $W: \R^n\to \mL(\R^N)$ be a matrix function, that is a measurable function for which
  $W(x)$ is a positive definite matrix for almost every $x\in\R^n$.
  We write $W\in B_2^N(d\mu)$ if the four equivalent estimates in Proposition~\ref{prop:mB2}
  hold, uniformly for all (possibly non-dyadic) cubes $Q$.

  We write $W\in A_\infty^N(d\mu)$ if the two equivalent estimates in Proposition~\ref{prop:mAinfty}
  hold, uniformly for all (possibly non-dyadic) cubes $Q$.

  In the case of scalar weights, $N=1$, we write $B_2(d\mu):= B_2^1(d\mu)$
  and $A_\infty(d\mu):= A_\infty^1(d\mu)$.
\end{defn}

A first use of this matrix reverse H\"older condition is the following estimate of 
matrix weighted averages. Further related estimates of square functions and maximal functions
are also possible, we shall not need that in this paper.

\begin{prop}   \label{prop:Etbound}
  The weighted averaging operators $E_t$ in Theorem~\ref{thm:main}
  have estimates
  $$
    \sup_{t>0}\|E_t\|_{L_2(\mu)\to L_2(\mu)}<\infty.
  $$
\end{prop}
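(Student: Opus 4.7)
The plan is to extract the matrix $B_2^N(d\mu)$ condition on $W$ from the hypothesis \eqref{eq:theWest}, and then apply the equivalent formulation Proposition~\ref{prop:mB2}(i) in a short duality argument on each dyadic cube, followed by orthogonality of the dyadic partition.

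First I would observe that \eqref{eq:theWest} is exactly the $A_\infty^N(d\mu)$ condition of Proposition~\ref{prop:mAinfty}(ii) applied to $W^2$. Combining this with the universal chain of inequalities
$$
 \bigl(\det \barint_Q W^2 d\mu\bigr)^{1/2}\ge \det W_Q \ge \exp\Bigl(\barint_Q \ln\det W\, d\mu\Bigr)
$$
recalled in the introduction yields $(\det \barint_Q W^2 d\mu)^{1/2}\le C_2 \det W_Q$. By condition (iv) of Proposition~\ref{prop:mB2}, the weight $W$ therefore belongs to $B_2^N(d\mu)$, and the equivalent formulation (i) of that proposition, namely
$$
 \bigl(\barint_Q |W(x)a|^2 d\mu(x)\bigr)^{1/2}\lesssim |W_Q a|\qquad\text{for all } a\in \R^N,
$$
is at our disposal uniformly in $Q$.

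Next I would reduce matters to a pointwise estimate on a single cube. Since $E_Q f$ is constant on each $Q\in\mD_t$,
$$
 \|E_t f\|_{L_2(\mu)}^2= \sum_{Q\in\mD_t}\mu(Q)\,|E_Q f|^2,
$$
so it suffices to prove $|E_Q f|^2\lesssim \barint_Q |f|^2 d\mu$ uniformly in $Q\in\mD$. To achieve this I would set $e:=E_Q f= W_Q^{-1}\barint_Q Wf\,d\mu$ and exploit the self-adjointness of $W_Q^{-1}$ and of $W(x)$ to rewrite
$$
 |e|^2=\bigl(e,\,E_Q f\bigr)= \barint_Q \bigl(W(x)W_Q^{-1}e,\, f(x)\bigr)\,d\mu(x),
$$
and then apply Cauchy--Schwarz. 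Proposition~\ref{prop:mB2}(i) with $a=W_Q^{-1}e$ gives
$$
 \bigl(\barint_Q |W(x)W_Q^{-1}e|^2\,d\mu\bigr)^{1/2}\lesssim |W_Q W_Q^{-1}e|=|e|,
$$
so $|e|^2\lesssim |e|\,(\barint_Q |f|^2 d\mu)^{1/2}$. Dividing by $|e|$ and squaring yields the desired bound, and summing over $Q\in\mD_t$ completes the proof.

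The argument is essentially a direct unwinding of the definitions once $B_2^N(d\mu)$ is in hand; the only place anything must be noticed is the implication \eqref{eq:theWest}$\Rightarrow W\in B_2^N(d\mu)$, which uses the full chain of universal inequalities rather than only the $A_\infty^N$ part of the hypothesis. In particular, the $A_\infty^N(d\mu)$ condition on $W$ itself plays no role here; it will only be needed later for the packing estimates behind the stopping time argument in Proposition~\ref{prop:coronastop}.
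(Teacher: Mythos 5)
Your proof is correct and follows essentially the same route as the paper: a duality/Cauchy--Schwarz argument on each dyadic cube combined with Proposition~\ref{prop:mB2}(i) applied to the vector $W_Q^{-1}E_Qf$ (the paper uses a unit vector in the direction of $E_Qf$ instead of $E_Qf$ itself, which is only a cosmetic difference), followed by summation over $Q\in\mD_t$. Your explicit derivation of $W\in B_2^N(d\mu)$ from \eqref{eq:theWest} via the universal determinant inequalities is also the intended one (cf.\ Corollary~\ref{cor:weightrelations}).
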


\begin{proof}
  Consider a dyadic cube $Q$.
  Let $v\in \R^N$ be a unit vector such that
  $|E_Q f|= (E_Qf, v)$.
  Using the matrix reverse H\"older inequality from Proposition~\ref{prop:mB2}, we obtain
\begin{multline*}
  |E_Q f|= \int_Q\left( W(x) (\mu(Q)W_Q)^{-1}v, f(x) \right) d\mu(x)\\
  \le \left( \int_Q | W(x) (\mu(Q)W_Q)^{-1} v|^2 d\mu(x)\right)^{1/2}
  \left( \int_Q |f(x)|^2 d\mu(x)\right)^{1/2} \\
  \lesssim \mu(Q)^{-1/2} \left| \int_Q W(x)(\mu(Q)W_Q)^{-1}v d\mu(x)\right|
  \left( \int_Q |f(x)|^2 d\mu(x)\right)^{1/2} \\
  = \mu(Q)^{-1/2} \left( \int_Q |f(x)|^2 d\mu(x)\right)^{1/2}.
\end{multline*}
Summing over $Q\in \mD_t$ gives the stated estimate.
\end{proof}

The following two propositions contain the main estimates for matrix weights that we need.
The first is a martingale square function estimate, which is a stopped and matrix-valued version of
\cite[VIII, Lem. 6.4]{Gar}.

\begin{prop}   \label{prop:matrixmarting}
Let $B_*(Q)$ denote the generations of stopping cubes under $Q\in \mD$ obtained from
 some stopping criterion, and let W be a matrix weight with $W\in L_2(Q;\mL(\R^N))$.
 Then, as positive matrices, we have the estimate
$$
  \sum_{R\in B_*(Q)\setminus \{Q\}} (W_R-W_{R_*})^2 \mu(R) \le ((W^2)_Q-(W_Q)^2 )\mu(Q).
$$
\end{prop}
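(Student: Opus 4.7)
The plan is to prove the estimate by induction on the generations of the stopping tree $B_*(Q)$, using a single one-generation matrix identity that isolates a positive remainder which can be discarded.

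Before starting the induction, I would first record the \emph{matrix Jensen inequality} $(W_R)^2 \le (W^2)_R$ for every cube $R$, which follows by testing against an arbitrary $v\in\R^N$ and applying Cauchy--Schwarz: $|W_R v|^2 = |\barint_R W(x)v\,d\mu|^2 \le \barint_R |W(x)v|^2\,d\mu = (v,(W^2)_R v)$. This guarantees that every quantity of the form $(W^2)_R-(W_R)^2$ appearing below is positive semidefinite, which is what makes the induction coherent.

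The core is the following one-generation identity. Fix $R\in B_*(Q)$, let $B_1(R)=\{R_i\}$ be its disjoint stopping children, and set $F:= R\setminus\bigcup_i R_i$. Expanding $(W_{R_i}-W_R)^2$ and using the two averaging identities $\sum_i\mu(R_i)W_{R_i}= \mu(R)W_R - \int_F W\,d\mu$ and $\sum_i\mu(R_i)(W^2)_{R_i}= \mu(R)(W^2)_R - \int_F W^2\,d\mu$, a straightforward (but noncommutative) computation gives
\[
\sum_i \mu(R_i)(W_{R_i}-W_R)^2 + \sum_i \mu(R_i)\bigl[(W^2)_{R_i}-(W_{R_i})^2\bigr] = \mu(R)\bigl[(W^2)_R-(W_R)^2\bigr] - \int_F (W-W_R)^2\,d\mu.
\]
The order of the matrix products must be preserved throughout the expansion, but because the cross terms appear in the symmetric pairs $W_{R_i}W_R$ and $W_RW_{R_i}$, the identity collapses correctly without requiring commutativity. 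The integral on the right is a positive-semidefinite matrix, so the left-hand side is dominated by $\mu(R)[(W^2)_R-(W_R)^2]$.

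Setting $T(R):=\sum_{R'\in B_*(R)\setminus\{R\}}\mu(R')(W_{R'}-W_{R'_*})^2$, I would then induct on the depth of the stopping tree. The recursive decomposition $T(R) = \sum_i \mu(R_i)(W_{R_i}-W_R)^2 + \sum_i T(R_i)$, combined with the induction hypothesis $T(R_i)\le \mu(R_i)[(W^2)_{R_i}-(W_{R_i})^2]$ and the identity above (discarding the positive $-\int_F$ term), delivers $T(R)\le \mu(R)[(W^2)_R-(W_R)^2]$. The base case, where $R$ has no stopping children, is trivial because $T(R)=0$ and the right side is nonneg by matrix Jensen. To handle the possibility that $B_*(Q)$ is countably infinite, I would first apply the induction to the truncated tree $\bigcup_{k=0}^N B_k(Q)$ to obtain the bound with the same right-hand side uniformly in $N$, and then pass to the monotone limit of positive matrices.

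The main place to be careful is the noncommutative bookkeeping in the one-generation identity; but since only products between $W_R$ and its children (or integrals of $W$) appear, and all such products enter in symmetric $W_{R_i}W_R + W_RW_{R_i}$ pairs, no genuine obstacle arises. Everything else is routine telescoping.
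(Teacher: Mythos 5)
Your proof is correct, and it is essentially the paper's argument in different packaging: your one-generation identity is exactly the martingale orthogonality that the paper encodes via the auxiliary functions $f_k$ (namely $\int_Q f_{k+1}f_k\,d\mu=\int_Q f_k^2\,d\mu$), and the positive terms you discard, $\int_F(W-W_R)^2\,d\mu$ together with the truncation/monotone-limit step, correspond to the paper's telescoping over generations and its final Jensen estimate. So this is the same approach, organized as an induction on tree depth rather than as a global telescoping sum.
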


\begin{proof}
Define functions $f_k:Q\to \mL(\R^N)$, $k=0,1,2,\ldots$, by
$$
  f_k(x):= \begin{cases} W_R, & x\in R\in B_k(Q), \\
  W(x), & x\in Q\setminus \bigcup B_k(Q).
  \end{cases}
$$
We note the integral identities
$$
  \int_{Q} f_{k+1}(x)f_k(x)d\mu(x)= \int_{Q} f_{k}(x)f_{k+1}(x)d\mu(x)=\int_{\R^n} f_k(x)^2 d\mu(x).
$$
This gives the telescoping sum
\begin{multline*}
  \sum_{R\in B_*(Q)\setminus \{Q\}} (W_R-W_{R_*})^2 \mu(R)
  \le \sum_{k=0}^\infty\int_Q (f_{k+1}(x)-f_k(x))^2 d\mu(x)\\
  = \sum_{k=0}^\infty \int_Q (f_{k+1}(x)^2-f_k(x)^2) d\mu(x) = \lim_{k\to\infty} \int_Q( f_k(x)^2-f_0(x)^2) d\mu(x) \\ = ((W^2)_Q-(W_Q)^2) \mu(Q).
\end{multline*}
\end{proof}

\begin{prop}   \label{prop:volbergstop}
  Assume that $W^2\in A_\infty^N(d\mu)$. Consider the stopping criterion that
  selects the maximal dyadic subcubes $R$ of $Q$ for which
  $$
    |W_QW_R^{-1}|\ge \lambda.
  $$
  Then for the first generation stopping cubes we have the
  packing condition
  $$
    \sum_{R\in B_1(Q)}\mu(Q)\lesssim (\ln\lambda)^{-1}\mu(Q).
  $$
   for $\lambda$ large enough.
\end{prop}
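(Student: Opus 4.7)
The plan is to combine the matrix martingale estimate of Proposition~\ref{prop:matrixmarting} with the sharp $A_\infty^N$ identity coming from Proposition~\ref{prop:mAinfty}(i), through a Cauchy--Schwarz argument that extracts a factor of $\log\lambda$ from each stopping cube. The preliminary step is to upgrade the hypothesis $W^2\in A_\infty^N(d\mu)$ to $W\in B_2^N(d\mu)$: chaining the matrix Jensen estimate $(W^2)_Q\ge W_Q^2$ (which implies $\det(W^2)_Q\ge(\det W_Q)^2$) with the generalized Jensen inequality $\det W_Q\ge\exp(\barint_Q\log\det W\,d\mu)$ and the hypothesis $\det(W^2)_Q\lesssim\exp(\barint_Q\log\det W^2\,d\mu)$ shows $\det(W^2)_Q\approx(\det W_Q)^2$. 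The elementary linear algebra fact that for positive matrices $A\ge B\ge 0$ with $\det A\lesssim\det B$ one has $A\lesssim B$ then yields $(W^2)_Q\lesssim W_Q^2$. In particular $(W^2)_Q^{-1/2}\approx W_Q^{-1}$, so Proposition~\ref{prop:mAinfty}(i) applied to $W^2$ takes the simplified form
\[
  \exp\Bigl(\barint_S\log|W(x)^{-1}a|\,d\mu(x)\Bigr)\approx|W_S^{-1}a|
\]
uniformly in cubes $S$ and vectors $a$.

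Applying Proposition~\ref{prop:matrixmarting} to the first generation gives the stopped matrix Carleson bound
\[
  \sum_{R\in B_1(Q)}(W_R-W_Q)^2\mu(R)\le((W^2)_Q-W_Q^2)\mu(Q)\lesssim W_Q^2\mu(Q),
\]
i.e.\ a Carleson-type estimate analogous to the FKP/Buckley condition for scalar $A_\infty$ weights. To extract the gain $\log\lambda$ from the stopping condition, fix $R\in B_1(Q)$ and choose a unit vector $v_R$ with $|W_R^{-1}W_Qv_R|\ge\lambda$; this is possible since $|W_R^{-1}W_Q|=|W_QW_R^{-1}|\ge\lambda$ (operator norms of transposes coincide). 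Applying the $A_\infty^N$ identity at both $Q$ and $R$ with $a=W_Qv_R$ yields, writing $g_R(x):=\log|W(x)^{-1}W_Qv_R|$,
\[
  \barint_R g_R\,d\mu-\barint_Q g_R\,d\mu\ge\log\lambda-C
\]
for some constant $C$ depending only on the $A_\infty^N$ character of $W^2$. Multiplying by $\mu(R)$, summing over $R\in B_1(Q)$, and applying Cauchy--Schwarz together with Jensen's inequality gives
\[
  (\log\lambda-C)^2\sum_R\mu(R)\le\sum_R\int_R(g_R-\barint_Q g_R)^2\,d\mu.
\]

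The main obstacle, and the heart of the argument, is to establish the Carleson-type bound $\sum_R\int_R(g_R-\barint_Q g_R)^2\,d\mu\lesssim\mu(Q)$ that closes the estimate. In the scalar case this would follow immediately from $\log w\in\bmo(d\mu)$, but for matrix weights one cannot reduce the problem to the BMO property of a single scalar function such as $\log\det W$, because the stopping condition on $|W_QW_R^{-1}|$ is not faithfully captured by the determinant ratio $\det W_Q/\det W_R$. Instead one must exploit the stopped matrix Carleson estimate above through a John--Nirenberg-style argument adapted to the matrix setting and uniform in the test directions $v_R$; this is the key technical innovation underlying Proposition~\ref{prop:coronastop}, as highlighted in the introduction's discussion of why the approach of \cite{ARR} breaks down in the fully matrix case. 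With this bound in hand, the resulting packing is $\sum_R\mu(R)\lesssim(\log\lambda)^{-2}\mu(Q)$, which implies the stated $(\log\lambda)^{-1}\mu(Q)$ bound for $\lambda$ sufficiently large.
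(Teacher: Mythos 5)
Your preliminary reductions match the paper's: the chain of determinant inequalities giving $(W^2)_Q\approx W_Q^2$, hence $W\in B_2^N(d\mu)$, and the resulting two-sided identity $\exp\left(\barint_S \ln|W(x)^{-1}a|\,d\mu(x)\right)\approx |W_S^{-1}a|$ are exactly how the paper begins, and your per-cube lower bound $\barint_R g_R\,d\mu-\barint_Q g_R\,d\mu\ge \ln\lambda - C$ is correct. But the proof does not close. The bound $\sum_R\int_R(g_R-\barint_Q g_R\,d\mu)^2\,d\mu\lesssim\mu(Q)$, which you yourself call the heart of the argument, is never established; you only gesture at ``a John--Nirenberg-style argument adapted to the matrix setting.'' That is precisely where the difficulty sits, and your setup makes it genuinely hard: since the maximizing direction $v_R$ changes from cube to cube, the functions $g_R$ are different for different $R$, so the sum cannot be controlled by a BMO bound for any single function. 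Moreover, the stopped martingale estimate from Proposition~\ref{prop:matrixmarting} that you record controls $\sum_R(W_R-W_Q)^2\mu(R)$, not oscillations of $\ln|W(x)^{-1}W_Qv_R|$, and it is never actually connected to the quantity you need; in the paper it is used in Proposition~\ref{prop:coronastop}, not here. As written, your argument is an unproved reduction rather than a proof.

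The paper closes the estimate with two simpler devices that your proposal misses. First, it replaces the cube-dependent vectors $v_R$ by finitely many fixed directions: $|W_R^{-1}W_Q|\ge\lambda$ forces $|W_R^{-1}W_Qe_i|\ge\lambda/\sqrt N$ for some fixed basis vector $e_i$, so $B_1(Q)\subset\bigcup_i B^{e_i}(Q)$, and for fixed $a=e_i$ the function $g(x)=\ln|W(x)^{-1}W_Qa|$ is the \emph{same} for all stopping cubes. Second, only an $L_1$ bound on the positive part is needed, not an $L_2$/John--Nirenberg bound: summing over the disjoint stopping cubes,
$$
\ln(\lambda/\sqrt N)\sum_{R\in B^{a}(Q)}\mu(R)\lesssim \sum_{R}\int_R g\,d\mu
\le \int_Q \ln^+|W(x)^{-1}W_Qa|\,d\mu
\le \int_Q g\,d\mu + \int_Q \ln^+|W(x)W_Q^{-1}|\,d\mu,
$$
where the first term on the right is $\lesssim\mu(Q)$ by the $A_\infty^N$ identity at $Q$ (since $|W_Q^{-1}W_Qa|=1$), and the second is $\lesssim\int_Q|W(x)W_Q^{-1}|^2\,d\mu\lesssim\mu(Q)$ using $\ln^+t\le t^2$ and the $B_2^N$ condition. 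This yields the single factor $(\ln\lambda)^{-1}$ directly, with no square-function or BMO input. You should rework the second half of your argument along these lines.
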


The analogous result for $W\in A_\infty^N(d\mu)$ is proved in \cite[Lem. 3.1]{Vol}.
The following proof for $W^2$ is a straightforward modification, but we give
it for completeness.

\begin{proof}
  By combining Propositions~\ref{prop:mAinfty} and \ref{prop:mB2},
  we obtain
  $$
     \exp\left( \barint_Q \ln |W(x)^{-1} a| d\mu(x) \right)\approx
   |W_Q^{-1} a|, \qquad \text{for all } a\in\R^N.
  $$
  Fix a unit vector $a$ and write $B^a(Q)$ for the sets of maximal dyadic subcubes $R$ of $Q$
  for which $|W_R^{-1}W_Q a|\ge \lambda/\sqrt n$. For $R\in B^a(Q)$, we have
  $$
  \exp\left( \barint_R \ln |W(x)^{-1} W_Q a| d\mu(x) \right)\approx |W_R^{-1}W_Q a|\ge \lambda/\sqrt n.
  $$
  With estimates similar to the proof of Proposition~\ref{prop:mAinfty}, this gives
  \begin{multline*}
    \ln(\lambda/\sqrt n) \sum_{R\in B^a(Q)}\mu(R)\lesssim \sum_{R\in B^a(Q)}
    \int_R \ln |W(x)^{-1} W_Q a| d\mu(x) \\
    \lesssim
    \int_Q \ln |W(x)^{-1} W_Q a| d\mu(x) + \int_Q \ln^+(|W(x)^{-1} W_Q a|)^{-1} d\mu(x)\\
     \lesssim \mu(Q) + \int_Q \ln^+|W(x) W_Q^{-1}| d\mu(x) \\
     \lesssim
   \mu(Q)+ \int_Q |W(x) W_Q^{-1}|^2 d\mu(x)\lesssim \mu(Q).
  \end{multline*}
  This completes the proof since $B_1(Q)\subset \bigcup_i B^{e_i}(Q)$, if $\{e_i\}$ is an
  ON-basis.
\end{proof}

We also need the following classical result for scalar weights.

\begin{prop}   \label{prop:scalarAinfty}
  Assume that $d\mu$ is a doubling measure on $\R^n$.
  Consider a scalar weight $w$ on $\R^n$ and define the measure $d\sigma:= w d\mu$.
  Then the following are equivalent.
  \begin{itemize}
    \item [{\rm (i)}]
      There exists $p>1$ so that uniformly for all cubes $Q$, we have
      the $A_{p'}$ estimate 
      $$
      \barint_Q w d\mu\lesssim \left( \barint_Q w^{-(p-1)} d\mu\right)^{-1/(p-1)}.
      $$
    \item [{\rm (ii)}]
      Uniformly for all cubes $Q$, we have
      the $A_\infty$ estimate 
      $$
      \barint_Q w d\mu\lesssim \exp\left( \barint_Q \ln w d\mu \right).
      $$
     \item [{\rm (iii)}]
      There exists $\alpha, \beta\in (0,1)$ so that uniformly for all cubes $Q$ and all subsets $E\subset Q$, we have $\sigma(E)\le \alpha \sigma(Q)$ whenever $\mu(E)\le \beta \mu(Q)$.
     \item [{\rm (iv)}]
      There exists $\delta>0$ so that uniformly for all cubes $Q$ and all subsets $E\subset Q$, we have
      $\frac{\sigma(E)}{\sigma(Q)}\lesssim \left(\frac{\mu(E)}{\mu(Q)}\right)^{\delta}$.
    \item [{\rm (v)}]
      There exists $q>1$ so that uniformly for all cubes $Q$, we have
      the $B_q$ estimate 
      $$
      \left( \barint_Q w^{q} d\mu\right)^{1/q} \lesssim \barint_Q w d\mu.
      $$
  \end{itemize}
\end{prop}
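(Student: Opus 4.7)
This is the classical Coifman--Fefferman characterization of scalar $A_\infty$ weights, adapted to the doubling measure $\mu$. I would establish it as the cycle
\[
(\text{i})\Rightarrow (\text{iv})\Rightarrow (\text{iii})\Rightarrow (\text{v})\Rightarrow (\text{i}),
\]
attaching (ii) by the standalone implications (i)$\Rightarrow$(ii), which is Jensen's inequality applied to $t\mapsto t^{-(p-1)}$, and (ii)$\Rightarrow$(i), which uses that (ii) forces $\log w\in\bmo(\mu)$, followed by John--Nirenberg. The tools throughout are H\"older, Jensen, a Calder\'on--Zygmund stopping-time argument, and John--Nirenberg.

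The H\"older links are short. For (i)$\Rightarrow$(iv), apply H\"older with exponents $p,p'$ to $\mu(E)=\int_E w^{1/p'}w^{-1/p'}\,d\mu$ to obtain
\[
\mu(E)\le \sigma(E)^{1/p'}\Bigl(\int_Q w^{-(p-1)}\,d\mu\Bigr)^{1/p},
\]
and substitute the bound from (i) in the second factor; a short rearrangement gives $(\mu(E)/\mu(Q))^{p'}\lesssim \sigma(E)/\sigma(Q)$, which is (iv) with $\delta=1/p'$. The step (iv)$\Rightarrow$(iii) is immediate upon choosing $\beta$ so small that $C\beta^\delta<1$. For (v)$\Rightarrow$(i), an analogous H\"older computation combined with the reverse H\"older bound from (v) produces a uniform bound on $(\barint_Q w\,d\mu)(\barint_Q w^{-(p-1)}\,d\mu)^{1/(p-1)}$ for a suitable $p>1$; this is the classical Coifman--Fefferman passage from a $B_q$ to an $A_p$ estimate.

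The main obstacle is the central step (iii)$\Rightarrow$(v), which is the Calder\'on--Zygmund stopping-time argument. Fix $Q$, a constant $K>1/\beta$, and let $B_1(Q)$ be the maximal dyadic subcubes $R\subset Q$ with $w_R>Kw_Q$. By Chebyshev on $\mu$ one has $\mu(\bigcup B_1(Q))\le \mu(Q)/K<\beta\mu(Q)$, so (iii) forces $\sigma(\bigcup B_1(Q))\le \alpha\sigma(Q)$; maximality together with doubling of $\mu$ bound $w_R\lesssim Kw_Q$ on each $R$, so pointwise $w\le Kw_Q$ almost everywhere outside the stopping set. Iterating this construction inside each stopping cube produces nested sets $E_1\supset E_2\supset\ldots$ with $\sigma(E_k)\le \alpha^k\sigma(Q)$ and pointwise $w\lesssim K^k w_Q$ on $E_{k-1}\setminus E_k$. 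Substituting into the layer-cake identity
\[
\int_Q w^q\,d\mu=\int_Q w^{q-1}\,d\sigma\lesssim w_Q^{q-1}\sigma(Q)\sum_{k\ge 0}K^{k(q-1)}\alpha^k,
\]
and summing the geometric series for $q>1$ small enough that $K^{q-1}\alpha<1$ yields (v).

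It remains to verify (ii)$\Rightarrow$(i). Using the elementary inequality $t_+\le e^t$ combined with (ii) gives, writing $m_Q:=\barint_Q \log w\,d\mu$,
\[
\barint_Q(\log w-m_Q)_+\,d\mu\le \barint_Q \exp(\log w-m_Q)\,d\mu = \frac{w_Q}{\exp m_Q}\lesssim 1;
\]
since the signed mean of $\log w-m_Q$ vanishes, the $L^1$ oscillation of $\log w$ on $Q$ is bounded, i.e.\ $\log w\in\bmo(\mu)$. John--Nirenberg with respect to the doubling $\mu$ then yields $\barint_Q w^{-\epsilon}\,d\mu\lesssim \exp(-\epsilon m_Q)\lesssim (\barint_Q w\,d\mu)^{-\epsilon}$ for $\epsilon>0$ sufficiently small, which rearranges to (i) with $p=1+\epsilon$, closing the loop.
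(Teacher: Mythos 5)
Your overall architecture is reasonable and your Calder\'on--Zygmund stopping-time argument for (iii)$\Rightarrow$(v) is correct and complete (the paper merely cites \cite{GCRF} for this step), but the cycle breaks at its first link. The H\"older computation you perform for (i)$\Rightarrow$(iv) yields
$$
\frac{\mu(E)}{\mu(Q)}\lesssim\left(\frac{\sigma(E)}{\sigma(Q)}\right)^{1/p'},
$$
which bounds $\mu(E)$ by a power of $\sigma(E)$. Condition (iv) is the \emph{opposite} comparison, $\sigma(E)/\sigma(Q)\lesssim(\mu(E)/\mu(Q))^{\delta}$, and the two are genuinely different statements: the one you derived is the consequence of the $A_{p'}$ half of the theory, while (iv) is the consequence of the reverse H\"older half. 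So you have not proved (iv), and the chain (i)$\Rightarrow$(iv)$\Rightarrow$(iii) collapses. The repair is the complementation trick the paper exploits: apply your inequality to $Q\setminus E$ to see that $\mu(E)\le\beta\mu(Q)$ forces $\sigma(E)\le 1-((1-\beta)/C)^{p'}<1$ times $\sigma(Q)$, i.e.\ (i)$\Rightarrow$(iii) directly; then recover (iv) from (v) by H\"older, which is the direction in which that computation actually works.

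The second gap is (v)$\Rightarrow$(i), which you dismiss as ``an analogous H\"older computation''. No H\"older computation can produce the $A_{p'}$ bound from the $B_q$ bound: H\"older only gives the lower bound $\big(\barint_Q w\,d\mu\big)\big(\barint_Q w^{-(p-1)}d\mu\big)^{1/(p-1)}\ge 1$, never the upper bound that (i) asserts. The classical passage runs (v)$\Rightarrow$(iii), then uses the symmetry of (iii) in $\mu$ and $\sigma$ (again via $E\mapsto Q\setminus E$) together with the fact that (iii) forces $d\sigma$ to be doubling, and finally repeats your stopping-time argument for the weight $1/w$ with respect to the measure $d\sigma$; this is exactly how the paper closes the loop, proving (iii)$\Rightarrow$(i) by a Calder\'on--Zygmund decomposition with $dx$, $w$ replaced by $d\sigma$, $1/w$. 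Your remaining steps --- (iv)$\Rightarrow$(iii), (i)$\Rightarrow$(ii) by the arithmetic--geometric mean inequality, and (ii)$\Rightarrow$(i) via $\log w\in\bmo(\mu)$ and John--Nirenberg for doubling measures --- are fine; the last is a legitimate alternative to the paper's route, which instead goes (ii)$\Rightarrow$(iii) by Chebyshev applied to $\ln(1+1/w)$.
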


For completeness, we sketch the proof.

\begin{proof}
  The implications $(v)\Rightarrow (iv)\Rightarrow (iii)$ are straightforward, the former using H\"older's inequality.
  Similarly $(i)\Rightarrow (ii)$ is straightforward, using the geometric-
  arithmetic mean inequality. To prove $(ii)\Rightarrow (iii)$, one can first establish
  an estimate $\mu(\sett{x\in Q}{w(x)\le \epsilon_1 w_Q})\le \epsilon_2\mu(Q)$ via
  Chebyshev's inequality applied to $\ln(1+ 1/w(x))$, for small $\epsilon_i$, and $\epsilon_1$ depending on $\epsilon_2$.
  From such estimate (iii), with the $d\mu$ and $d\sigma$ swapped, follows.
  But (iii) is symmetric with respect to $d\mu$ and $d\sigma$, as is readily seen from replacing
  $E$ by $Q\setminus E$.

  In particular we note that if any of the conditions $(i)-(v)$ hold,
  then also $d\sigma$ is a doubling measure.
  The deeper implications $(iii)\Rightarrow (i)$ and $(iii)\Rightarrow (v)$ can be proved using Calder\'on--Zygmund decompositions as in \cite[IV.2, Lem. 2.5]{GCRF}.
  For $(iii)\Rightarrow (v)$, replace $dx$ and $w$ by $d\mu$ and $w$.
  For $(iii)\Rightarrow (i)$, replace $dx$ and $w$ by $d\sigma$ and $1/w$.
\end{proof}

We end this section by noting the relations between the classes of weights, which are most important to us.

\begin{cor}   \label{cor:weightrelations}
Let $W$ be a matrix weight and let $a\in\R^n\setminus\{0\}$ be a non-zero vector.
Define the scalar weight $w(x):= |W(x)a|$.
Then 
$$
  W^2\in A_\infty^N(d\mu)\Leftrightarrow W\in A_\infty^N(d\mu)\cap B_2^N(d\mu)
  \Rightarrow w\in B_2(d\mu)\Leftrightarrow w^2\in A_\infty(d\mu).
$$
\end{cor}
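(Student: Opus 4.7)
The plan is to read all three statements off the always-valid chain
$$
\Bigl(\det \barint_Q W^2\,d\mu\Bigr)^{1/2} \;\ge\; \det \barint_Q W\,d\mu \;\ge\; \exp\Bigl(\barint_Q \ln\det W\,d\mu\Bigr)
$$
recorded in the introduction. Since $\ln\det W^2=2\ln\det W$, the $A_\infty^N$ condition for the matrix weight $W^2$, in the determinant form of Proposition~\ref{prop:mAinfty}(ii), is exactly comparability of the two outer terms of this chain; the $B_2^N$ condition for $W$, in the determinant form of Proposition~\ref{prop:mB2}(iv), is comparability of the first two terms; and the $A_\infty^N$ condition for $W$ is comparability of the last two.

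For the first equivalence, the direction $\Leftarrow$ is obtained by concatenating the two inequalities coming from Propositions~\ref{prop:mB2}(iv) and~\ref{prop:mAinfty}(ii) and then squaring. Conversely, if $W^2\in A_\infty^N(d\mu)$, the outer terms of the chain are comparable, and since the middle term $\det \barint_Q W\,d\mu$ is pinched between them, each of the intermediate inequalities becomes two-sided; so $W\in A_\infty^N(d\mu)\cap B_2^N(d\mu)$.

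For the middle implication, I would apply Proposition~\ref{prop:mB2}(i) to the vector $a$ and then pass to the pointwise norm under the integral by the triangle inequality,
$$
\Bigl(\barint_Q |W(x)a|^2\,d\mu(x)\Bigr)^{1/2}\;\lesssim\; \Bigl|\barint_Q W(x)a\,d\mu(x)\Bigr|\;\le\; \barint_Q |W(x)a|\,d\mu(x),
$$
which is precisely $w\in B_2(d\mu)$. Only the $B_2^N$ half of the hypothesis is actually needed for this step.

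For the last equivalence, Jensen's inequality gives $\exp\barint_Q\ln w^2\,d\mu = (\exp\barint_Q\ln w\,d\mu)^2\le(\barint_Q w\,d\mu)^2$, so $w^2\in A_\infty(d\mu)$ immediately implies $w\in B_2(d\mu)$. Conversely, $w\in B_2(d\mu)$ is condition~(v) of Proposition~\ref{prop:scalarAinfty} with $q=2$, which via the equivalence with~(ii) gives $w\in A_\infty(d\mu)$; combining this with the $B_2$ estimate and squaring yields $\barint_Q w^2\,d\mu\lesssim (\barint_Q w\,d\mu)^2\lesssim (\exp\barint_Q\ln w\,d\mu)^2 = \exp\barint_Q\ln w^2\,d\mu$, i.e.\ $w^2\in A_\infty(d\mu)$. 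The only subtlety in the whole corollary is spotting the three-term chain interpretation in the first equivalence; everything after that is essentially bookkeeping.
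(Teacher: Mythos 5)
Your proposal is correct and follows essentially the same route as the paper: the first equivalence via the determinant characterisations (the three-term chain whose intermediate inequalities always hold with constant $1$), the middle implication via Proposition~\ref{prop:mB2}(i) plus the triangle inequality, and the last equivalence via Proposition~\ref{prop:scalarAinfty} (its $(v)\Leftrightarrow(ii)$ together with Jensen). The paper states these three steps without elaboration; you have simply filled in the intended details.
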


\begin{proof}
  The first equivalence is clear from the determinant characterisations of $A_\infty^N(d\mu)$ and $B_2^N(d\mu)$.
  The implication follows from Proposition~\ref{prop:mB2}(i) and the triangle inequality.
  The last equivalence follows from Proposition~\ref{prop:scalarAinfty}.
\end{proof}

Note in particular concerning the last equivalence in Corollary~\ref{cor:weightrelations} that
for scalar weights $B_2(d\mu)\subset A_\infty(d\mu)$. 
See Buckley~\cite[Prop. 3.10]{Buck}.
We do not know if this is true for matrix weights, $N\ge 2$.
Note also that it is essential to work with non-dyadic cubes here. For example dyadic $B_2(d\mu)$
is not contained in dyadic $A_\infty(d\mu)$.

\section{Compactness and stopping times}  \label{sec:proof}

In this section we prove Theorem~\ref{thm:main}, using two
stopping time arguments, one on the test functions and one on
the matrix weight.

\begin{lem}   \label{lem:maximizingvec}
  Let $x,y\in\R^m$ be unit vectors, and let $A: \R^m\to \R^{m'}$ be linear operators.
  Then we have the inequality
  $$
    |Ay|\ge \Big( (x,y)-\sqrt 2\sqrt{1-|Ax|/|A|} \Big)|A|.
  $$
\end{lem}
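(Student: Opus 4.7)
The plan is to reduce everything to a coordinate calculation via the singular value decomposition of $A$. Assuming $|A|>0$, choose an orthonormal basis $\{v_i\}_{i=1}^m$ of $\R^m$ consisting of right singular vectors of $A$, with singular values $\sigma_i\ge 0$ ordered so that $\sigma_1=|A|$, and set $t_i:= \sigma_i/|A|\in[0,1]$. Expand $x=\sum_i a_i v_i$ and $y=\sum_i b_i v_i$; then the ingredients of the lemma become explicit sums: $(x,y)=\sum a_i b_i$, $\sum a_i^2=\sum b_i^2=1$, and $|Az|^2/|A|^2=\sum t_i^2 c_i^2$ whenever $z=\sum c_i v_i$.

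The key trick is the splitting
$$
  (x,y)=\sum_i t_i a_i b_i + \sum_i (1-t_i) a_i b_i,
$$
together with two applications of Cauchy--Schwarz. For the first sum, pairing $a_i$ with $t_i b_i$ gives the bound $\bigl(\sum a_i^2\bigr)^{1/2}\bigl(\sum t_i^2 b_i^2\bigr)^{1/2}=|Ay|/|A|$, which is precisely the quantity we want to lower bound. For the second sum, Cauchy--Schwarz yields $\bigl(\sum (1-t_i)^2 a_i^2\bigr)^{1/2}$, and here the elementary inequality $(1-t)^2\le 1-t^2$ valid on $[0,1]$ upgrades this to $\bigl(\sum (1-t_i^2) a_i^2\bigr)^{1/2}=\sqrt{1-(|Ax|/|A|)^2}$. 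Factoring $1-s^2=(1-s)(1+s)\le 2(1-s)$ for $s=|Ax|/|A|\in[0,1]$ converts this into $\sqrt{2(1-|Ax|/|A|)}$.

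Adding the two estimates gives $(x,y)\le |Ay|/|A|+\sqrt{2(1-|Ax|/|A|)}$, and rearranging yields the claim. I do not anticipate any real obstacle: the only slightly non-obvious step is the elementary bound $(1-t)^2\le 1-t^2$ on $[0,1]$, whose role is exactly to convert the natural Cauchy--Schwarz defect $\sum (1-t_i)^2 a_i^2$ into the squared deficit $1-(|Ax|/|A|)^2$, which is what is actually controlled by the hypothesis on $|Ax|$.
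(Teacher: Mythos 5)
Your proof is correct. The SVD reduction is legitimate (an orthonormal eigenbasis of $A^*A$ gives $|Az|^2/|A|^2=\sum t_i^2c_i^2$ with $t_i\in[0,1]$ and $\max_i t_i=1$), both applications of Cauchy--Schwarz are applied in the right pairing, and the two elementary inequalities $(1-t)^2\le 1-t^2$ on $[0,1]$ and $1-s^2\le 2(1-s)$ do exactly what you claim; the degenerate case $|A|=0$ is trivial and you correctly set it aside.

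The route is genuinely different from the paper's. The paper stays coordinate-free: it writes $y=cx+(y-cx)$, applies the reverse triangle inequality $|Ay|\ge c|Ax|-|A|\,|y-cx|$, computes $|y-cx|=\sqrt{1+c^2-2c(x,y)}$, linearizes the square root from above, and then optimizes the free parameter $c$ (the choice $c=\sqrt{|A|/(2(|A|-|Ax|))}$ produces exactly the stated constant). That argument uses nothing about the target space beyond the triangle inequality and the definition of the operator norm, so it would survive in any normed target and needs no spectral theorem. Your argument, by contrast, exploits the Euclidean structure of $\R^{m'}$ via the singular value decomposition, but in return it is optimization-free and actually proves the sharper intermediate bound
$$
  |Ay|\ge \Big( (x,y)-\sqrt{1-(|Ax|/|A|)^2} \Big)|A|,
$$
from which the stated inequality follows by the final loosening $1-s^2\le 2(1-s)$. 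Since the lemma is only used qualitatively in Proposition~\ref{prop:Katonewsectors} (to show that near-maximizing directions form a quantitative cone), either form suffices.
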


\begin{proof}
  For any $c>0$ we calculate
\begin{multline*}
  |Ay|= |A(cx+(y-cx))|\ge c|Ax|-|A| |y-cx| \\
  = c|Ax|-|A|\sqrt{1+c^2-2c(x,y)})
  \ge c|Ax| - c|A|(1+1/(2c^2)-(x,y)/c) \\
  = -c(|A|-|Ax|)-|A|/(2c)+ (x,y)|A|.
\end{multline*}
  The result follows by setting $c:= \sqrt{|A|/(2(|A|-|Ax|))}$.
\end{proof}

The following proposition roughly says that we can find an $\epsilon$-net of unit vectors
such that any matrix has lower bounds on a compact approximation of the half-space determined
by one of these unit vectors.
Independently, this argument has earlier been found by Hyt\"onen~\cite{Hunpub}.

\begin{prop}    \label{prop:Katonewsectors}
  Let $\epsilon_1>0$ be given, and let $M$ be a finite set of unit vectors in $\R^N$ such that
  for all unit vectors $v_1\in \R^N$ there exists $v_0\in M$ such that $(v_0,v_1)\ge 1-\epsilon_1^4/8$.
  For each $v_0\in M$ define the set
  $$
    S(v_0):= \sett{(t,x)\in \R^{1+n}_+}{|\gamma_t(x)|\le \tfrac 2{\epsilon_1^3}|\gamma_t(x)v| \text{ for all } v\in D(v_0)},
  $$
  where
  $$
    D(v_0):= \sett{v\in \R^N}{(v,v_0)\ge \epsilon_1 \text{ and } |v|\le 1/\epsilon_1}.
  $$
  Then $\bigcup_{v_0\in M} S(v_0)= \R^{1+n}_+$.
\end{prop}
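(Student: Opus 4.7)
The plan is to fix $(t,x)\in\R^{1+n}_+$, set $A:=\gamma_t(x)$, and exhibit a single $v_0\in M$ with $(t,x)\in S(v_0)$. The strategy is straightforward: pick a near-maximizer $v_1$ of $|Av|$ on the unit sphere, take $v_0\in M$ close to $v_1$, and then use Lemma~\ref{lem:maximizingvec} twice to get lower bounds on $|Av|$ for every $v$ lying in the compact truncated half-space $D(v_0)$. The case $A=0$ is vacuous, so assume $A\ne 0$.

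\emph{Step 1 (choice of $v_0$).} Choose a unit vector $v_1\in\R^N$ with $|Av_1|=|A|$, which exists by continuity on the compact unit sphere. By hypothesis on $M$, pick $v_0\in M$ with $(v_0,v_1)\ge 1-\epsilon_1^4/8$. Applying Lemma~\ref{lem:maximizingvec} with $x=v_1$, $y=v_0$ (so the square root vanishes since $|Av_1|=|A|$) gives
\[
|Av_0|\ge (v_1,v_0)\,|A|\ge (1-\epsilon_1^4/8)|A|,
\]
so $1-|Av_0|/|A|\le \epsilon_1^4/8$.

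\emph{Step 2 (lower bound on $D(v_0)$).} Let $v\in D(v_0)$. Cauchy--Schwarz with $(v,v_0)\ge\epsilon_1$ forces $|v|\ge\epsilon_1$, so $\epsilon_1\le |v|\le 1/\epsilon_1$. Writing $\hat v:=v/|v|$, we have $(v_0,\hat v)=(v_0,v)/|v|\ge \epsilon_1^2$. Now apply Lemma~\ref{lem:maximizingvec} with $x=v_0$, $y=\hat v$: using Step~1,
\[
\sqrt{2}\sqrt{1-|Av_0|/|A|}\le \sqrt{2}\cdot \sqrt{\epsilon_1^4/8}= \epsilon_1^2/2,
\]
so the lemma yields
\[
|A\hat v|\ge \bigl((v_0,\hat v)- \epsilon_1^2/2\bigr)|A|\ge (\epsilon_1^2/2)|A|.
\]
Multiplying by $|v|\ge\epsilon_1$ gives $|Av|\ge (\epsilon_1^3/2)|A|$, i.e.\ $|A|\le (2/\epsilon_1^3)|Av|$. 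Since $v\in D(v_0)$ was arbitrary, $(t,x)\in S(v_0)$, completing the proof.

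The construction is entirely elementary; the only conceptual point is that the single near-maximizing direction $v_1$ controls the lower bound along the entire compactly truncated half-space $D(v_0)$, and this is exactly what Lemma~\ref{lem:maximizingvec} was designed for. The only ``quantitative'' step is matching constants: the factor $\epsilon_1^4/8$ in the $\epsilon$-net hypothesis is chosen precisely so that $\sqrt{2(1-|Av_0|/|A|)}\le \epsilon_1^2/2$, which is half of the floor $(v_0,\hat v)\ge\epsilon_1^2$, leaving $\epsilon_1^2/2$ as a usable lower bound before the final multiplication by $|v|\ge\epsilon_1$. No obstacle arises; the argument is purely algebraic once Lemma~\ref{lem:maximizingvec} is in hand.
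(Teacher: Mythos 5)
Your proof is correct and follows essentially the same route as the paper: choose a norm-attaining unit vector $v_1$, pick $v_0\in M$ near it, and apply Lemma~\ref{lem:maximizingvec} twice (first to $v_1,v_0$, then to $v_0, v/|v|$) with exactly the same constants. You merely make explicit the final steps the paper leaves implicit, namely $(v_0,v/|v|)\ge\epsilon_1^2$ from $|v|\le 1/\epsilon_1$ and the multiplication by $|v|\ge\epsilon_1$.
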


\begin{proof}
  Let $(t,x)\in \R^{1+n}_+$ and consider the operator $\gamma:= \gamma_t(x)$.
  Let $v_1$ be a unit vector such that $|\gamma|= |\gamma v_1|$.
  Pick $v_0\in M$ such that $(v_0,v_1)\ge 1-\epsilon_1^4/8$.
  We need to show that
  $|\gamma_t(x)|\le \tfrac 2{\epsilon_1^3}|\gamma_t(x)v|$ holds for all $v\in D(v_0)$, so that $(t,x)\in S(v_0)$.
  To this end we apply Lemma~\ref{lem:maximizingvec} twice.
  First apply it to $v_1$ and $v_0$, giving
  $$
    |\gamma v_0|\ge (1-\epsilon_1^4/8)|\gamma|.
  $$
  Then apply it to $v_0$ and $v/|v|$, where $v\in D(v_0)$. This yields
  $$
    |\gamma v|/|v|\ge \Big(\epsilon_1^2-\sqrt 2\sqrt{\epsilon_1^4/8}\Big)|\gamma|= \tfrac 12\epsilon_1^2|\gamma|.
  $$
  This completes the proof.
\end{proof}

The first stopping time argument used in the proof of Theorem~\ref{thm:main}, following \cite{AHLMcT},
is the following.

\begin{prop}   \label{prop:Katostop}
  Fix $\epsilon_2>0$ and a unit vector $v_0\in \R^N$. Consider the stopping criterion
  which selects the maximal subcubes $R\in \mD$ of $Q\in \mD$ for which
  either
  $$
    |E_R b_Q^{v_0}|> 1/\epsilon_2
  $$
  or
  $$
    (v_0, W_Q^{-1}W_R E_R b_Q^{v_0} )<\epsilon_2.
  $$
  Then the first generation stopping cubes, denoted $B_1^b(Q)$, satisfies the packing condition
  $$
    \sum_{R\in B_1^b(Q)} \mu(R)\le (1-\delta) \mu(Q)
  $$
  for some $\delta>0$ depending on $\epsilon_2$, if $\epsilon_2$ is small enough.
\end{prop}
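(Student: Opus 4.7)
The plan is to test the normalization $(v_0, E_Q b_Q^{v_0}) = 1$ against $\mu(Q)$ and decompose. Write $b := b_Q^{v_0}$, partition the first-generation stopping cubes as $B_1^b(Q) = B^{(1)} \cup B^{(2)}$ where $R \in B^{(1)}$ satisfies $(v_0, W_Q^{-1} W_R E_R b) < \epsilon_2$ and $R \in B^{(2)}$ otherwise (so $|E_R b| > 1/\epsilon_2$), and set $G := Q \setminus \bigcup_{R \in B_1^b(Q)} R$. Using $\int_R W b \, d\mu = \mu(R) W_R E_R b$, one obtains
$$
\mu(Q) = \int_G (W W_Q^{-1} v_0, b) \, d\mu + \sum_{R \in B^{(1)}} \mu(R) (v_0, W_Q^{-1} W_R E_R b) + \sum_{R \in B^{(2)}} \mu(R)(v_0, W_Q^{-1} W_R E_R b).
$$

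First I would handle the two easy pieces. The $B^{(1)}$ sum is bounded above by $\epsilon_2 \sum \mu(R) \le \epsilon_2 \mu(Q)$ (signed inner products cause no trouble because only an upper bound is needed). For $B^{(2)}$, the size condition $|E_R b| > 1/\epsilon_2$ combined with the pointwise estimate $|E_R b|^2 \mu(R) \lesssim \int_R |b|^2 \, d\mu$ coming from the proof of Proposition~\ref{prop:Etbound}, together with disjointness of the stopping cubes, gives $\mu(\bigcup_{R \in B^{(2)}} R) \lesssim \epsilon_2^2 \|b\|_{L_2(\mu)}^2 \le \epsilon_2^2 C_3^2 \mu(Q)$.

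The key step is to upgrade Cauchy--Schwarz on the remaining integrals. For $E \in \{G, \bigcup B^{(2)}\}$, set $w(x) := |W(x) W_Q^{-1} v_0|$ and bound
$$
\int_E (W W_Q^{-1} v_0, b) \, d\mu \le \Big(\int_E w^2 \, d\mu\Big)^{1/2} C_3 \mu(Q)^{1/2}.
$$
By Corollary~\ref{cor:weightrelations} applied with $a = W_Q^{-1} v_0$, the hypothesis $W^2 \in A_\infty^N(d\mu)$ from \eqref{eq:theWest} implies $w^2 \in A_\infty(d\mu)$, and Proposition~\ref{prop:scalarAinfty}(iv) then produces a $\delta > 0$ with
$$
\int_E w^2 \, d\mu \lesssim (\mu(E)/\mu(Q))^\delta \int_Q w^2 \, d\mu \lesssim (\mu(E)/\mu(Q))^\delta \mu(Q),
$$
the last step using the matrix reverse H\"older inequality $W \in B_2^N(d\mu)$ from Proposition~\ref{prop:mB2}. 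Setting $\eta := \mu(G)/\mu(Q)$ and combining,
$$
1 \le C_3 \eta^{\delta/2} + \epsilon_2 + C_3^{1+\delta} \epsilon_2^\delta.
$$
Choosing $\epsilon_2$ small enough that $\epsilon_2 + C_3^{1+\delta} \epsilon_2^\delta \le 1/2$ forces $\eta \ge (2C_3)^{-2/\delta} =: \delta' > 0$, so $\sum_{R \in B_1^b(Q)} \mu(R) = \mu(Q) - \mu(G) \le (1 - \delta') \mu(Q)$.

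The main obstacle is the $G$-integral estimate: a naive Cauchy--Schwarz on $\int_G (W W_Q^{-1} v_0, b)\, d\mu$ gives only the trivial bound $\lesssim \mu(Q)$, which carries no information as $\mu(G) \to 0$ and therefore cannot close the inequality. The gain is precisely the scalar $A_\infty$ decay exponent $\delta$ for $w^2$, ultimately provided by the matrix $A_\infty^N$ assumption on $W^2$; this is where the matrix hypothesis \eqref{eq:theWest} enters the proof in an essential way.
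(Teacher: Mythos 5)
Your proposal is correct and follows essentially the same route as the paper's proof: you decompose the normalization $(v_0,E_Qb_Q^{v_0})=1$ over the stopping cubes, control the large-average cubes in measure via Proposition~\ref{prop:Etbound}, and beat the remaining integral by combining Cauchy--Schwarz with the scalar $A_\infty(d\mu)$ decay of $x\mapsto|W(x)W_Q^{-1}v_0|^2$ (Corollary~\ref{cor:weightrelations} plus Proposition~\ref{prop:scalarAinfty}) and the $B_2^N(d\mu)$ bound on $\int_Q|W W_Q^{-1}v_0|^2\,d\mu$. The only cosmetic difference is bookkeeping: the paper folds the region of the large-average cubes into the good set $G$ and subtracts their measure at the end, whereas you treat $\bigcup B^{(2)}$ as a separate set of measure $\lesssim\epsilon_2^2\mu(Q)$ and apply the same $A_\infty$ decay to it, which closes the argument equally well.
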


\begin{proof}
  Let $B'(Q)$ denote the stopping cubes for which $|E_R b_Q^{v_0}|>1/\epsilon_2$ and let $B''(Q):=B_1^b(Q)\setminus B'(Q)$.
  We have
$$
  \sum_{R\in B'(Q)} \mu(R)\lesssim \epsilon_2^2\sum_{R\in B'(Q)} \int_R |b_Q^{v_0}|^2 d\mu\le C_3\epsilon_2^2 \mu(Q),
$$
since $|E_R b_Q^{v_0}|^2\lesssim \mu(R)^{-1}\int_R |b_Q^{v_0}|^2 d\mu$ by Proposition~\ref{prop:Etbound}.

Writing $G:= Q\setminus \bigcup_{R\in B''(Q)}R$, we get
\begin{multline*}
  1= (v_0, E_Q b_Q^{v_0})= \sum_{R\in B''(Q)} \left(v_0, W_Q^{-1}W_R E_R b_Q^{v_0}\right)\frac {\mu(R)}{\mu(Q)} \\ +
  \left(v_0, (\mu(Q)W_Q)^{-1} \int_G W(x)b_Q^{v_0}(x)d\mu(x)\right) \\
  \le \sum_{R\in B''(Q)} \epsilon_2 \tfrac {\mu(R)}{\mu(Q)}  +
  \int_G (W(x) (\mu(Q)W_Q)^{-1}v_0, b_Q^{v_0}(x)) d\mu(x) \\
  \le \epsilon_2 + C_3\left( \int_G |W(x)(\mu(Q)W_Q)^{-1}v_0|^2 d\mu(x) \right)^{1/2} \mu(Q)^{1/2} \\
  \le \epsilon_2 + C\left(\frac{\mu(G)}{\mu(Q)}\right)^{\delta/2} \left( \int_Q |W(x)(\mu(Q)W_Q)^{-1}v_0|^2 d\mu(x) \right)^{1/2} \mu(Q)^{1/2} \\
  \le \epsilon_2 + C\left(\frac{\mu(G)}{\mu(Q)}\right)^{\delta/2} \left| \int_Q W(x)(\mu(Q)W_Q)^{-1}v_0 d\mu(x) \right| \mu(Q)^{1/2} \\
  = \epsilon_2 + C (\mu(G)/\mu(Q))^{\delta/2}.
\end{multline*}
We have used that $x\mapsto |W(x)(\mu(Q)W_Q)^{-1}v_0|^2$ is an $A_\infty(d\mu)$ weight
by Corollary~\ref{cor:weightrelations}, with $Q$ fixed.

Therefore
$$
   \sum_{R\in B_1^b(Q)} \mu(R)\le (C\epsilon_2^2+1-((1-\epsilon_2)/C)^{2/\delta}) \mu(Q),
$$
and we see that it suffices to choose $\epsilon_2$ small as claimed.
\end{proof}

Next we set up a stopping time argument to control the oscillations of the averages of $W$.

\begin{prop}   \label{prop:coronastop}
  Fix $\epsilon_3>0$. Consider the stopping criterion
  which selects the maximal subcubes $R\in \mD$ of $Q\in \mD$ for which
  $$
    |W_Q^{-1}W_R - I |>\epsilon_3.
  $$
  Then the generations of stopping cubes, denoted $B_*^W(Q)$, satisfy the packing condition
  $$
    \sum_{R\in B_*^W(Q)} \mu(R)\lesssim \epsilon_3^{-2}\mu(Q).
  $$
\end{prop}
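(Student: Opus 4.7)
The plan is to combine the matrix martingale square function of Proposition~\ref{prop:matrixmarting} with the $B_2^N$ reverse H\"older estimate of Proposition~\ref{prop:mB2}(iii) (recall that \eqref{eq:theWest} encodes both the $B_2^N$ and the $A_\infty^N$ conditions on $W$, by Corollary~\ref{cor:weightrelations}), supplemented by the Volberg-type packing of Proposition~\ref{prop:volbergstop} to handle the varying scale of $W_{R_*}$ along the stopping chain.

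Applying Proposition~\ref{prop:matrixmarting} directly to the stopping tree $B_*^W(Q)$ yields, as positive matrices,
$$
\sum_{R\in B_*^W(Q)\setminus\{Q\}} (W_R-W_{R_*})^2\,\mu(R)\le \mu(Q)\bigl((W^2)_Q-W_Q^2\bigr).
$$
Conjugating both sides by $W_Q^{-1}$, taking the trace, and using the bound $|W_Q^{-1}(W^2)_Q W_Q^{-1}|\lesssim 1$ from Proposition~\ref{prop:mB2}(iii), I obtain the global estimate
$$
\sum_{R}\tr\bigl(W_Q^{-1}(W_R-W_{R_*})^2 W_Q^{-1}\bigr)\,\mu(R)\lesssim \mu(Q).
$$
The stopping condition $|W_{R_*}^{-1}(W_R-W_{R_*})|>\epsilon_3$, combined with the factorisation $W_{R_*}^{-1}(W_R-W_{R_*})=(W_{R_*}^{-1}W_Q)\bigl(W_Q^{-1}(W_R-W_{R_*})\bigr)$ and submultiplicativity in the form $|AB|\ge|B|/|A^{-1}|$, gives the pointwise lower bound
$$
\tr\bigl(W_Q^{-1}(W_R-W_{R_*})^2 W_Q^{-1}\bigr)\ \ge\ |W_Q^{-1}(W_R-W_{R_*})|^2 \ \ge\ \epsilon_3^2/|W_{R_*}^{-1}W_Q|^2.
$$

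To turn this into a packing estimate I must bound $|W_{R_*}^{-1}W_Q|=|W_Q W_{R_*}^{-1}|$ (by adjoint-invariance of the operator norm), uniformly in $R_*$. For this I would introduce the auxiliary Volberg-type stopping from Proposition~\ref{prop:volbergstop}, with $\lambda$ chosen so large that the single-generation packing is less than $\tfrac12$; Lemma~\ref{lem:geomet} then yields the total Volberg packing $\sum_S\mu(S)\lesssim\mu(Q)$. Within each Volberg sawtooth rooted at $S$, every subcube $R'$ satisfies $|W_S W_{R'}^{-1}|<\lambda$. Running the martingale argument above with $S$ replacing $Q$ and restricting to cubes $R$ whose (A)-stopping parent $R_*$ lies in that sawtooth, the factor $|W_{R_*}^{-1}W_S|^{-2}\ge\lambda^{-2}$ becomes uniform, producing $\sum\mu(R)\lesssim\lambda^2\epsilon_3^{-2}\mu(S)$ for these cubes. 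Summing over the Volberg partition $\sum_S\mu(S)\lesssim\mu(Q)$ then delivers the required bound $\sum_{R\in B_*^W(Q)}\mu(R)\lesssim\epsilon_3^{-2}\mu(Q)$, with $\lambda$ absorbed as an absolute constant.

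The main obstacle I anticipate is the combinatorial matching of the two stopping trees. The Volberg tree and the (A)-stopping tree $B_*^W(Q)$ do not nest in general, so when applying the martingale estimate rooted at a Volberg stopping cube $S$, some bookkeeping is required to verify that the (A)-parent of each $R$ relative to the subtree inside $S$ coincides with $R_*$ as defined in $B_*^W(Q)$, and that cubes $R\in B_*^W(Q)$ whose (A)-parent lies above the relevant Volberg root are properly reassigned. A clean way around this would be to work throughout with the combined stopping criterion (either (A) or Volberg), which forces the parent relation to agree on both trees.
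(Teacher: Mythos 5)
Your proposal is correct and follows essentially the same route as the paper: the traced, $W$-conjugated martingale estimate of Proposition~\ref{prop:matrixmarting} controlled via $B_2^N(d\mu)$, combined with the Volberg-type stopping of Proposition~\ref{prop:volbergstop} (plus Lemmas~\ref{lem:geomet} and \ref{lem:JN}) to make the factor $|W_{R_*}^{-1}W_S|\le\lambda$ uniform on each sawtooth. The combinatorial matching of the two trees that you flag is handled in the paper exactly as you suggest, by restricting the auxiliary $\lambda$-stopping to cubes of $B_*^W(Q_0)$ so that the sawtooths partition that tree and the parent relation is unambiguous.
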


\begin{proof}
Clearly it suffices to prove
$$
  \sup_Q \frac 1{\mu(Q)}\sum_{R\in B_*^W(Q)\setminus\{Q\}} |I-W_{R_*}^{-1} W_R|^2 \mu(R)<\infty.
$$
Fix $Q_0\in \mD$.
Throughout this proof $R_*$ denotes the stopping parent relative to $B_*^W(Q_0)$.
We claim that
$$
\sum_{R\in B_*^W(Q_0)\setminus\{Q_0\}} |I-W_{R_*}^{-1} W_R|^2 \mu(R)\lesssim \mu(Q_0),
$$
with the implicit constant independent of $Q_0$.
To prove this we use the stopping criterion that selects the maximal dyadic subcubes $R$ of $Q$,
for which $R\in B_*^W(Q_0)$ and $|W_R^{-1}W_Q|>\lambda$,  for some fixed parameter $\lambda$.
Use superscript $Q_0$ to denote the so obtained stopping cubes and sawtooths.
By Proposition~\ref{prop:volbergstop}, Lemma~\ref{lem:geomet} and Lemma~\ref{lem:JN}, 
choosing $\lambda$ large, it suffices to prove
$$
   \sum_{R\in B_*^W(Q_0), R_*\in G^{Q_0}(S)} |W_{R_*}^{-1}(W_{R_*}- W_R)|^2 \mu(R)\lesssim \mu(S)
$$
for all $S\in B_*^{Q_0}(Q_0)$, since the above stopping criterion is at least as strict as that in
Proposition~\ref{prop:volbergstop}.
From Proposition~\ref{prop:matrixmarting}, we obtain
$$
  \tr \left(W_S^{-1}\sum_{R\in B_*^W(S)\setminus\{S\}}(W_{R_*}-W_{R})^2\mu(R) W_S^{-1} \right)
  \le \tr( W_S^{-1}(W^2)_S \mu(S) W_S^{-1}).
$$
Since $\tr (A^*A)\approx|A^*A|= |A|^2$ for any matrix, this reads
$$
   \sum_{R\in B_*^W(S)\setminus\{S\}} |W_S^{-1}(W_{R_*}-W_{R})|^2 \mu(R)
  \lesssim |(W^2)_S^{1/2} W_S^{-1}|^2 \mu(S)\lesssim \mu(S),
$$
since $W\in B_2^N(d\mu)$.
This shows
\begin{multline*}
   \sum_{R\in B_*^W(Q_0), R_*\in G^{Q_0}(S)} |W_{R_*}^{-1}(W_{R_*}- W_R)|^2 \mu(R) \\
   \le
   \lambda^2 \sum_{R\in B_*^W(Q_0), R_*\in G^{Q_0}(S)} |W_{S}^{-1}(W_{R_*}- W_R)|^2 \mu(R)
   \\
   \le \lambda^2\sum_{R\in B_*^W(S)\setminus\{S\}} |W_{S}^{-1}(W_{R_*}- W_R)|^2 \mu(R) \lesssim \mu(S),
\end{multline*}
which completes the proof.
\end{proof}

\begin{proof}[Proof of Theorem~\ref{thm:main}]
  By Proposition~\ref{prop:Katonewsectors},
we have
$$
\iint_{\widehat Q} |\gamma(t,x)|^2 \frac{dtd\mu(x)}t\le \sum_{v_0\in M} \iint_{\widehat Q} |\gamma(t,x)1_{S(v_0)}|^2 dtd\mu(x),
$$
where for a fixed $\epsilon_1$-net $M$, the sum is finite.
By Lemmas~\ref{lem:JN} and \ref{lem:geomet} and Propositions~\ref{prop:Katostop} and Proposition~\ref{prop:coronastop}, it suffices to prove the estimate
$$
  \iint_{G^W(S_1)\cap G^b(S_2)} |\gamma(t,x)1_{S(v_0)}|^2 \frac{dtd\mu(x)}t \lesssim \mu(S_2),
$$
for $S_1\supset S_2$.
By Proposition~\ref{prop:Katonewsectors}, we have
$$
|\gamma(t,x)1_{S(v_0)}|\le \tfrac 2{\epsilon_1^3} |\gamma(t,x)v|,
$$
for all $v\in D(v_0)$.
We want to use this estimate with $v= E_R b_{S_2}^{v_0}$,
where we have dyadic cubes $R\in G^W(S_1)\cap G^b(S_2)$ and $S_2\subset S_1$.
To verify that $E_R b_{S_2}^{v_0}\in D(v_0)$,
we have from Proposition~\ref{prop:Katostop} the estimates
$$
  |E_R b_{S_2}^{v_0}|\le 1/\epsilon_2
$$
and
$$
  (v_0, W_{S_2}^{-1}W_{R}E_R b_{S_2}^{v_0})\ge \epsilon_2.
$$
Since $R\in G^W(S_1)$, from Proposition~\ref{prop:coronastop} we get the estimate
$$
  |W_{S_1}^{-1}W_R-I|\le \epsilon_3.
$$
Moreover, we may assume $G^W(S_1)\cap G^b(S_2)\ne \emptyset$, so $S_2\in G^W(S_1)$
and therefore
$$
  |W_{S_1}^{-1}W_{S_2}-I|\le \epsilon_3.
$$
It follows that $|W_{S_2}^{-1}W_{R}-I|\le 2\epsilon_3/(1-\epsilon_3)$.
This gives
$$
  (v_0, E_R b_{S_2}^{v_0})\ge \epsilon_2-\frac{2\epsilon_3}{1-\epsilon_3}\frac 1{\epsilon_2}.
$$
We now fix the values of $\epsilon_1, \epsilon_2$ and $\epsilon_3$ as follows,
depending on the values $C_1, C_2, C_3, C_4, n$ and $N$ in the statement of Theorem~\ref{thm:main}.
First choose $\epsilon_2$ small enough to satisfy the hypothesis in Propositions~\ref{prop:Katostop}.
Then choose $\epsilon_3$ small enough so that
$$
(v_0, E_R b_{S_2}^{v_0})\ge \epsilon_2/2.
$$
Finally choose $\epsilon_1$ small enough so that $E_R b_{S_2}^{v_0}\in D(v_0)$, 
for all $R\in G^W(S_1)\cap G^b(S_2)$.
We obtain the estimate
\begin{multline*}
  \iint_{G^W(S_1)\cap G^b(S_2)} |\gamma(t,x)1_{S(v_0)}|^2 \frac{dtd\mu(x)}t \\ \lesssim
  \iint_{G^W(S_1)\cap G^b(S_2)} |\gamma(t,x) E_R b_{S_2}^{v_0}|^2 \frac{dtd\mu(x)}t \\ \le
  \iint_{\widehat S_2} |\gamma(t,x) E_R b_{S_2}^{v_0}|^2 \frac{dtd\mu(x)}t \le C_4^2 \mu(S_2),
\end{multline*}
which completes the proof.
\end{proof}

\section{A reverse reverse triangle inequality}  \label{sec:reverev}

As an appendix, we include here a result in linear algebra, which was found in a failed
attempt to prove Proposition~\ref{prop:coronastop} by reduction to scalar weights,
but which should have independent interest.

\begin{prop}
  Fix dimension $m$ and consider positive linear operators $A, B: \R^m\to \R^m$.
  Let $\epsilon>0$. Then there exists $\delta>0$ such that
$$
  |Ax-Bx|\le \epsilon |Bx|\qquad\text{for all } x\in \R^m,
$$
for any positive linear operators $A$ and $B$ on $\R^m$ which satisfy
$$
  \big| |Ax|-|Bx| \big|\le \delta |Bx|\qquad\text{for all } x\in \R^m.
$$
\end{prop}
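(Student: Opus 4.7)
The plan is to reformulate the proposition in terms of the operator $M:=AB^{-1}$, which is well defined since the positive matrix $B$ is invertible. Substituting $y=Bx$, the hypothesis $\bigl||Ax|-|Bx|\bigr|\le\delta|Bx|$ becomes $\bigl||My|-|y|\bigr|\le\delta|y|$ for all $y\in\R^m$, i.e.\ every singular value of $M$ lies in $[1-\delta,1+\delta]$, while the conclusion $|Ax-Bx|\le\epsilon|Bx|$ becomes the operator norm bound $|M-I|\le\epsilon$. So it suffices to show: for every $\epsilon>0$ there exists $\delta>0$ (depending also on $m$) such that whenever $M=AB^{-1}$ with $A,B$ positive on $\R^m$ and all singular values of $M$ in $[1-\delta,1+\delta]$, one has $|M-I|\le\epsilon$.

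The algebraic input I would exploit is the identity $M=AB^{-1}=B^{1/2}SB^{-1/2}$ with $S:=B^{-1/2}AB^{-1/2}$ positive self-adjoint. Hence $M$ is similar to a positive operator and therefore has positive real eigenvalues. For every eigenvector $v$ with eigenvalue $\lambda$, $|\lambda|=|Mv|/|v|$, which by hypothesis lies in $[1-\delta,1+\delta]$; so the eigenvalues of $M$ are trapped in this interval. The naive bound $|M-I|\le|B^{1/2}||B^{-1/2}||S-I|\le|B^{1/2}||B^{-1/2}|\delta$ is useless because the condition number $|B^{1/2}||B^{-1/2}|$ can be arbitrarily large; removing this dependence on the geometry of $B$ is the main obstacle.

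I would resolve it by a compactness and contradiction argument. If the statement fails, one obtains $\epsilon>0$ and sequences of positive operators $A_n,B_n$ and unit vectors $u_n\in\R^m$ such that every singular value of $M_n:=A_nB_n^{-1}$ lies in $[1-1/n,1+1/n]$ while $|(M_n-I)u_n|>\epsilon$. Since $|M_n|\le 1+1/n$, the matrices $M_n$ are confined to a compact subset of $\mL(\R^m)$, so passing to a subsequence $M_n\to M$ and $u_n\to u$ with $|u|=1$. Continuity of the singular values forces every singular value of $M$ to equal $1$, so $M$ is orthogonal, hence normal. Continuity of the spectrum as a multiset, together with the observation that each $M_n$ has all eigenvalues real, positive and in $[1-1/n,1+1/n]$, forces every eigenvalue of $M$ to equal $1$. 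A normal operator whose spectrum consists only of $1$ must equal $I$, so $M=I$, contradicting $|(M-I)u|\ge\epsilon>0$.
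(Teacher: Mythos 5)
Your argument is correct, and it takes a genuinely different route from the paper. The paper proves the proposition by induction on the dimension $m$: it diagonalizes $B$, splits $\R^{1+m}=\R\oplus\R^m$, writes $A$ in block form, and extracts from the two-sided bound $(1-\delta)^2B^2\le A^2\le(1+\delta)^2B^2$ explicit estimates on the blocks ($|b|\lesssim\delta e$, $|a-e|\lesssim\delta e$, $\bigl||Dy|-|Fy|\bigr|\lesssim\delta|Fy|$) that reduce the claim to the $(m-1)$-dimensional case and yield $|AB^{-1}-I|\le\epsilon$. You instead pass to $M=AB^{-1}=B^{1/2}SB^{-1/2}$ and run a compactness--contradiction argument, using that the two closed conditions surviving the limit --- all singular values equal to $1$ (hence $M$ orthogonal, hence normal) and all eigenvalues equal to $1$ (via convergence of characteristic polynomials, since each $M_n$ is similar to the positive operator $S_n$ and so has real positive spectrum pinched near $1$) --- together force $M=I$. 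The combination is essential and you deploy it correctly: neither condition alone suffices, since the set of matrices similar to a positive one is not closed. The trade-off is that your soft argument is shorter and avoids all block computations, but it is intrinsically non-quantitative: it produces no explicit $\delta(\epsilon,m)$, whereas the paper's inductive scheme is in principle effective (the author explicitly flags the precise dependence of $\delta$ on $m$ and $\epsilon$ as a question of interest, which a compactness proof cannot address). One minor point worth making explicit in a write-up: the operators $A_n,B_n$ themselves need not be bounded along the sequence; only $M_n$ is, and that is all your argument uses.
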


We remark that the more precise dependence of $\delta$ on $m$ as well as on $\epsilon$
remains to be found. Note that in dimension one the result is trivial, and also that
the converse implication is true in any dimension, since
$$
  \big| |Ax|-|Bx| \big|\le |Ax-Bx|
$$
by the reverse triangle inequality.

\begin{proof}
  The proof is by induction on dimension $m$, the details of which we return to towards the end of the proof.
  Consider dimension $1+m$ and split $\R^{1+m}=\R\oplus \R^m$ and the matrix $A$ as
  $$
    A= \begin{bmatrix} a & b^t \\ b & D \end{bmatrix},
  $$
  where $0<a\in \R$, $b\in \R^m$ and $D$ is a positive $m\times m$ matrix.
  By the spectral theorem we may, and will, assume that $B$ is a diagonal matrix.
  Its block decomposition we write
  $$
    B= \begin{bmatrix} e & 0 \\ 0 & F \end{bmatrix},
  $$
  where without loss of generality we assume that $e>0$ is less or equal to the smallest
  of the diagonal elements in the diagonal matrix $F$.

  The positivity of $A$ shows that
  $ax^2+ 2x(b,y)+(Dy,y)\ge 0$
  for all $x\in\R$ and $y\in\R^m$,
  or equivalently
  \begin{equation}  \label{eq:by}
    |(b,y)|^2\le a (Dy,y)\qquad\text{for all } y\in \R^m.
  \end{equation}
  From the hypothesis, we also have $(1-\delta)^2 B^2\le A^2\le (1+\delta)^2 B^2$,
  which in block form becomes
  $$
    (1-\delta)^2\begin{bmatrix} e^2 & 0 \\ 0 & F^2 \end{bmatrix}\le
     \begin{bmatrix} a^2+|b|^2 & ab^t+b^t D \\ ab+Db & D^2+ bb^t \end{bmatrix}
     \le (1+\delta)^2\begin{bmatrix} e^2 & 0 \\ 0 & F^2 \end{bmatrix}.
  $$
  In particular, from the diagonal blocks we have
  \begin{align}
    |a^2+|b|^2-e^2| &\lesssim \delta e^2 \label{eq:b2y} \\
   \big||Dy|^2+|(b,y)|^2-|Fy|^2\big| &\lesssim \delta |Fy|^2,\qquad\text{for all } y\in \R^m. \label{eq:b3y}
 \end{align}
  Similar to the derivation of \eqref{eq:by}, the inequality $(1-\delta)^2 B^2\le A^2$
  shows that
  \begin{multline}   \label{eq:b4y}
    |(ab+Db,y)|^2 \le (a^2+|b|^2-(1-\delta)^2 e^2)(|Dy|^2+|(b,y)|^2-(1-\delta)^2|Fy|^2) \\
    \lesssim \delta^2 e^2|Fy|^2.
  \end{multline}

  We want to show a lower bound $|Dy|\ge \alpha |Fy|$, for some $\alpha>0$. To this end we argue by contradiction
  and assume that there exists $y\in \R^m$ such that $|Dy|< \alpha |Fy|$.
  From \eqref{eq:by} and \eqref{eq:b2y} we see that
  $$
    |(b,y)|^2\lesssim e(\alpha|Fy||y|)\lesssim \alpha|Fy|(e|y|)\lesssim\alpha |Fy|^2.
  $$
  From \eqref{eq:b3y} we get
  $$
    (1-\alpha^2-C\alpha)|Fy|^2\lesssim \delta|Fy|^2,
  $$
  which is a contradiction if $\alpha$ is small, if $\delta<1/2$.
  This shows that $|Dy|\gtrsim |Fy|$ for all $y\in \R^m$.

  From \eqref{eq:b4y} we get, with $y= D^{-1}z$, the
  estimate
  $$
    |((I+aD^{-1})b,z)|^2\lesssim \delta^2e^2|z|^2.
  $$
  This together with the positivity of $aD^{-1}$ shows
  \begin{equation}   \label{eq:b45y}
    |b|\le |(I+aD^{-1})b|\lesssim \delta e.
  \end{equation}
  This inserted into \eqref{eq:b2y} and \eqref{eq:b3y} gives estimates
  \begin{align}
    |a-e| &\lesssim \delta e \label{eq:b5y} \\
   \big||Dy|-|Fy|\big| &\lesssim \delta |Fy|,\qquad\text{for all } y\in \R^m. \label{eq:b6y}
 \end{align}

  We now argue by induction over dimension $m$. If $m=1$, then
  \eqref{eq:b45y}, \eqref{eq:b5y} and \eqref{eq:b6y} show that $|AB^{-1}-I|\le\epsilon$
  provided $\delta$ is small enough.
  If $m\ge 2$, then assuming the proposition proved for $m-1$, it follows
  from \eqref{eq:b6y} that $|DF^{-1}-I|\le \epsilon$ if $\delta$ is small enough.
  Together with \eqref{eq:b45y} and \eqref{eq:b5y}, this shows that $|AB^{-1}-I|\le\epsilon$
  and completes the proof.
\end{proof}

\bibliographystyle{acm}

\end{document}